\newtheorem{lemma}{Lemma}
\newtheorem{theorem}{Theorem} 
\newtheorem{corollary}{Corollary} 
\newtheorem{definition}{Definition}
\newtheorem{proposition}{Proposition}
\newcommand{\floor}[1]{\left\lfloor{#1}\right\rfloor}
\newcommand{\ceil}[1]{\left\lceil{#1}\right\rceil}
\renewcommand{\quote}[1]{\textquotedblleft #1\textquotedblright}
\begin{document}
%
\title{Absolutely avoidable order-size pairs in hypergraphs}
\date{\vspace{-5ex}}
\author{
	Lea Weber
	\thanks{
		Karlsruhe Institute of Technology, Karlsruhe, Germany;
		email: \mbox{\texttt{lea.weber@kit.edu}}. The research was partially supported by the DFG grant FKZ AX 93/2-1.
	}
}

\maketitle

\begin{abstract}
	\setlength{\parskip}{\medskipamount}
	\setlength{\parindent}{0pt}
	\noindent
	
		For a fixed integer $r\ge 2$, we call a pair $(m,f)$ of integers, $m\geq 1$, $0\leq f \leq \binom{m}{r}$, \emph{absolutely avoidable} if there is $n_0$, such that for any pair of integers $(n,e)$ with $n>n_0$ and  $0\leq e\leq \binom{n}{r}$ there is an  $r$-uniform hypergraph on $n$ vertices and $e$ edges that contains no induced sub-hypergraph on $m$ vertices and $f$ edges. Some pairs are clearly not absolutely avoidable, for example $(m,0)$ is not absolutely avoidable since any sufficiently sparse hypergraph on at least $m$ vertices contains independent sets on $m$ vertices. Here we show that for any $r\ge 3$ and $m \ge m_0$, 
		either the pair $(m, \floor{\binom mr/2})$ or the pair $(m, \floor{\binom{m}{r}/2}-m-1)$ is absolutely avoidable. \\
		Next, following the definition of Erd\H{o}s, Füredi, Rothschild and S\'os, we define the \emph{density} of a pair $(m,f)$ as $\sigma_r(m,f) = \limsup_{n \to \infty} \frac{|\{e : (n,e) \to (m,f)\}|}{\binom mr}$. 
		We show that for $ r\ge 3$ most pairs $(m,f)$ satisfy $\sigma_r(m,f)=0$, and that for $m > r$, there exists no pair $(m,f)$ of density 1. 
\end{abstract}

	\section{Introduction}	
	One of the central topics of graph theory deals with properties of classes of graphs that contain no subgraph isomorphic to some given fixed graph,  see for example Bollob\'as \cite{B}.
	Similarly, graphs with forbidden induced subgraphs have been investigated from several different angles -- enumerative, structural, algorithmic, and more.  \\
	
Erd\H{o}s, Füredi, Rothschild and S\'os  \cite{EFRS} initiated  a study of a class of graphs that do not forbid a specific induced subgraph, but rather forbid any induced subgraph on a given number $m$ of vertices and number $f$ of edges.  Following their notation  we say  an $r$-uniform hypergraph (also referred to as an $r$-graph) $G$ \emph{arrows} a pair of non-negative integers $(m,f)$ and write $G \to_r (m,f)$ if $G$ has an induced sub-hypergraph on $m$ vertices and $f$ hyperedges.  We say that  a pair $(n,e)$  of non-negative integers \emph{arrows} (or simply induces) the pair $(m,f)$,  and write $$(n,e) \to_r (m,f)$$ if  for any $r$-graph $G$ on $n$ vertices and $e$ hyperedges, $G \to_r (m,f)$. We say a pair $(n,e)$ is \emph{realised} by an $r$-graph $G$ if $G$ has $n$ vertices and $e$ edges. If $r$ is clear from the context, we might omit the index and simply write $(n,e)\to (m,f)$.\\

 As an example for $r=2$, if $t_{m-1}(n)$ denotes the number of edges in the balanced complete $(m-1)$-partite graph on $n$ vertices, then by Tur\'an's theorem \cite{T} we know that any graph on $n$ vertices with more than $t_{m-1}(n)$ edges contains $K_{m}$,  a complete subgraph on $m$ vertices. Equivalently stated, we have $(n,e) \to (m,\binom m2)$  if and only if $e > t_{m-1}(n)$. \\
 
Let $r, m, f$ integers, $m \ge r \ge 3$ and $0 \le f\le \binom mr$. 
Following the notation in \cites{EFRS, HMZ}, we define $$\sigma_r(m,f) = \limsup\limits_{n \to \infty}\frac{|\{e: (n,e) \to (m,f)\}|}{\binom nr}.$$
Erd\H{o}s, Füredi, Rothschild and S\'os   \cite{EFRS} considered $\sigma_2(m,f)$ for different choices of $(m,f)$. One of their main results is the following theorem. 
 \begin{theorem}\cite{EFRS}\label{thm:efrs}
 	If $(m,f) \in \{(2,0), (2,1), (4,3), (5,4), (5,6)\}$, then $\sigma_2(m,f) =1 $; otherwise, $\sigma_2(m,f) \le \frac23$.
 \end{theorem}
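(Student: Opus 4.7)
The statement splits into a lower bound ($\sigma_2 = 1$ for the five listed pairs) and an upper bound ($\sigma_2 \le 2/3$ for all others). Throughout, I would exploit the complement symmetry $\sigma_2(m,f) = \sigma_2(m, \binom{m}{2} - f)$, which interchanges $(5,4) \leftrightarrow (5,6)$ and $(2,0) \leftrightarrow (2,1)$, fixes $(4,3)$, and lets me restrict attention to $f \le \binom{m}{2}/2$ throughout.

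For the lower bound, the pairs $(2,0)$ and $(2,1)$ are immediate: $(n,e) \not\to (2,0)$ forces $e = \binom{n}{2}$ (complete graph) and $(n,e) \not\to (2,1)$ forces $e = 0$. For $(4,3)$ I would argue structurally: the $4$-vertex graphs with exactly $3$ edges are $P_4$, $K_{1,3}$, and $K_3 \cup K_1$, and forbidding all three as induced subgraphs is highly restrictive. In particular, $P_4$-free graphs are cographs, and adding the other two constraints should force $G$ to be close to a complete multipartite graph or a disjoint union of cliques, each of which realises only $o\bigl(\binom{n}{2}\bigr)$ distinct values of $e$. A similar (though more delicate) structural classification handles $(5,4)$, after which $(5,6)$ follows by complement.

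For the upper bound, my plan is to exhibit, for each remaining pair $(m,f)$, a family of graphs $\{G_{n,e}\}$ on $n$ vertices avoiding $(m,f)$ as an induced sub-hypergraph, whose edge counts cover a set of asymptotic density at least $1/3$ in $\{0, 1, \dots, \binom{n}{2}\}$. Natural building blocks are complete multipartite graphs, disjoint unions of cliques, and blow-ups of small gadget graphs $H$ chosen so that $H$ itself avoids $(m,f)$ induced; varying part sizes and locally adding or deleting a small number of edges then yields a continuous range of realised edge counts. The complement symmetry halves the workload.

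The main obstacle is the upper bound: infinitely many non-exceptional pairs $(m,f)$ must be handled in a uniform way, and for each one must verify both that the construction truly avoids $(m,f)$ induced and that the union of edge counts achieves density at least $1/3$. Producing a single scheme that covers all non-exceptional $(m,f)$, rather than an ad hoc construction per pair, is the heart of the EFRS argument. A secondary difficulty lies on the lower-bound side: one must show that the structural classification for $(4,3)$ and $(5,4)$ is tight enough to leave only $o\bigl(\binom{n}{2}\bigr)$ possible values of $e$, which requires carefully quantifying how close-to-regular the bipartite-like or cograph-like extremal structures must be.
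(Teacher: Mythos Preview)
This theorem is not proved in the present paper at all: it is quoted verbatim from Erd\H{o}s, F\"uredi, Rothschild and S\'os~\cite{EFRS} as background, with the citation attached directly to the theorem environment and no accompanying proof. Consequently there is nothing in the paper to compare your proposal against.

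As a standalone sketch, your outline identifies the right overall shape---complement symmetry, structural classification of the exceptional pairs, and constructions covering a positive-density set of edge counts for the remaining pairs---but it is explicitly a plan rather than a proof. The passages ``should force $G$ to be close to\ldots'', ``a similar (though more delicate) structural classification handles $(5,4)$'', and ``natural building blocks are\ldots'' each stand in for substantial arguments that you have not supplied. In particular, the heart of the EFRS upper bound is precisely the uniform construction scheme you flag as ``the main obstacle'', and you have not proposed any concrete mechanism for it; likewise, the $(5,4)$ case on the lower-bound side is genuinely intricate in~\cite{EFRS} and cannot be waved through by analogy with $(4,3)$. So while nothing you wrote is wrong, the proposal does not yet contain the key ideas needed to close either direction.
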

 The upper bound $\frac 23$  was subsequently improved by He et al.~\cite{HMZ} to $ \frac12$. On the other hand, they showed that there are infinitely many pairs for which the equality $\sigma(m,f) = \frac12$ holds. \\
 
 In \cite{EFRS}, Erd\H{o}s, Füredi, Rothschild and S\'os  also gave a construction that shows that \quote{most of the} $\sigma_2(m,f)$ are $0$, by showing that for large $n$ almost all pairs $(n,e)$ can be realised as  the  vertex disjoint union of a clique and a high-girth graph, and that for fixed $m$ most pairs $(m,f)$ cannot be realised as the vertex disjoint union of a clique and a forest. Axenovich and the author~\cite{AW} investigated the existence of so-called absolutely avoidable pairs $(m,f)$ for which we not only have $\sigma_2(m,f) = 0$, but the stronger property $\{e : (n,e) \to (m,f)\} = \emptyset$ for large $n$. Here, we extend this notion to hypergraphs:
 
\begin{definition} A pair $(m,f)$ is  \textbf{absolutely r-avoidable} if there is $n_0$ such that for each $n>n_0$ and for any $e\in \{0, \ldots, \binom{n}{r}\}$, $(n,e) \not\to_r  (m,f)$.
\end{definition}

In \cite{AW} we showed that for $r=2$ there are infinitely many absolutely avoidable pairs  and amongst others constructed an infinite family of absolutely avoidable pairs of the form $(m, \binom m2/2)$ and showed that for any sufficiently large $m$, there exists an $f$ such that $(m,f)$ is absolutely avoidable. Here, we extend this result to higher uniformities:

\begin{theorem}\label{thm:main}
	Let $r \ge 3$. Then there exists $m_0$ such that that for any $m \ge m_0$ either $(m, \floor{\binom mr/2})$ or $(m, \floor{\binom mr/2} -m-1)$ is absolutely avoidable. 
\end{theorem}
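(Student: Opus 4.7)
The plan is to exhibit, for each admissible $e$, an explicit $r$-graph $G_{n,e}$ on $n$ vertices with $e$ edges whose set of induced $m$-vertex edge counts (the \emph{spectrum at $m$}) avoids one of $f_1:=\lfloor\binom{m}{r}/2\rfloor$ or $f_2:=f_1-m-1$. Since absolute avoidability demands the same forbidden value across all $e$, the whole argument reduces to selecting a single $f^\ast\in\{f_1,f_2\}$, depending on $m$ but not on $e$, that is avoided by every $G_{n,e}$.

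I would use a colex construction: given $e$, let $a$ be maximal with $\binom{a}{r}\le e$ and set $t=e-\binom{a}{r}<\binom{a}{r-1}$; let $T'\subseteq\binom{[a]}{r-1}$ be the first $t$ sets in colex order; set $E(G_{n,e})=\binom{[a]}{r}\cup\{\tau\cup\{a+1\}:\tau\in T'\}$, and leave the remaining vertices of $[n]$ isolated. For $S\in\binom{[n]}{m}$, writing $k:=|S\cap[a]|$ and $j:=\mathbf{1}_{a+1\in S}$, one computes $|E(G_{n,e}[S])|=\binom{k}{r}+j\cdot|T'\cap\binom{S\cap[a]}{r-1}|$. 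Thus the spectrum is the union of the ``$j=0$'' values $\{\binom{k}{r}:0\le k\le\min(a,m)\}$ and the ``$j=1$'' perturbations $\binom{k}{r}+s$ for $s$ a realisable colex-shadow count.

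The key arithmetic: for $r\ge 3$ and $m$ large, letting $k_0$ be maximal with $\binom{k_0}{r}\le f_1$, the gap $\binom{k_0+1}{r}-\binom{k_0}{r}=\binom{k_0}{r-1}$ has order $m^{r-1}$, vastly exceeding $m+1=f_1-f_2$. In particular at most one of $f_1,f_2$ can coincide with any $\binom{k}{r}$, so we may pick $f^\ast\in\{f_1,f_2\}$ avoiding the entire ``$j=0$'' spectrum. For the ``$j=1$'' spectrum, the bound $s\le\binom{k}{r-1}$ combined with $f^\ast\in[\binom{k_0-1}{r},\binom{k_0+1}{r})$ forces the only potentially dangerous $k$'s to lie in a bounded range around $k_0$. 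For each such $k$, I would analyse the realisable values $|T'\cap\binom{U}{r-1}|$, $U\in\binom{[a]}{k}$, using the Kruskal--Katona structure of colex-first $T'$: this realisability set is a sparse, structured subset of $\{0,\ldots,t\}$, and the $(m+1)$-shift between $f_1$ and $f_2$ should provide just enough room to ensure that at least one of $f_1-\binom{k}{r}$ and $f_2-\binom{k}{r}$ lies in a persistent ``gap'' for every admissible $t$. The range $e>\binom{n}{r}/2$ reduces to the above by complementation, since $(m,f)$ is absolutely avoidable iff $(m,\binom{m}{r}-f)$ is.

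The main obstacle will be this last point: certifying that a single $f^\ast$, chosen uniformly in $e$, has $f^\ast-\binom{k}{r}$ outside the realisability set of colex shadows for every $t$ and every relevant $k$. Boundary situations---small $e$ with $a<m$ (where the spectrum is trivially bounded by $e\le\binom{a+1}{r}-1$), $f_1$ exactly equal to $\binom{k_0}{r}$, and very small $t$ so that $T'$ is too sparse to realise the dangerous shadow count---must be treated separately, but should be easier.
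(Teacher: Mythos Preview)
Your approach has a genuine gap: the colex construction cannot work, for either choice of $f^\ast$. Take any $f^\ast\in\{f_1,f_2\}$ and consider the specific value $e=f^\ast$. With $k_0$ maximal such that $\binom{k_0}{r}\le f^\ast<\binom{k_0+1}{r}$, your parameters give $a=k_0$ and $t=f^\ast-\binom{k_0}{r}$, so $T'\subseteq\binom{[k_0]}{r-1}$ with $|T'|=t$. Now choose $S=[k_0+1]\cup I$, where $I\subseteq\{k_0+2,\dots,n\}$ consists of $m-k_0-1$ isolated vertices (possible since $k_0<m\le n$). Then $k=|S\cap[a]|=k_0$, $j=1$, $U=S\cap[a]=[k_0]$, and
\[
|E(G_{n,e}[S])|=\binom{k_0}{r}+\Bigl|T'\cap\tbinom{[k_0]}{r-1}\Bigr|=\binom{k_0}{r}+|T'|=\binom{k_0}{r}+t=f^\ast.
\]
Thus $G_{n,f^\ast}\to_r(m,f^\ast)$, and this holds for \emph{both} $f_1$ and $f_2$. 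The failure is intrinsic: colex packs the surplus edges into the first few vertices, so a well-chosen $m$-set sees all of them. Your hoped-for ``sparse, structured'' realisability set always contains $s=t$ (via $U=[a]$), and as $e$ sweeps an interval of length $\binom{k_0}{r-1}\gg m+1$, the value $\binom{k_0}{r}+t$ sweeps through both $f_1$ and $f_2$. A secondary issue: your complementation step for $e>\binom{n}{r}/2$ actually requires the small-$e$ construction to avoid \emph{both} $f^\ast$ and $\binom{m}{r}-f^\ast$, not just $f^\ast$; the sentence ``$(m,f)$ is absolutely avoidable iff $(m,\binom{m}{r}-f)$ is'' does not by itself license splitting the range of $e$.

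The paper's remedy is precisely to make the surplus edges \emph{spread out} rather than concentrated. It realises each $e\le c\binom{n}{r}$ as a clique $K_k^{(r)}$ disjoint from an $m$-sparse $r$-graph (every $m$-set spans at most $m$ edges), whose existence with $\Omega\bigl((n-k)^{r-1+1/(m+1)}\bigr)\gg\binom{k}{r-1}$ edges is established probabilistically. Then every induced $m$-vertex subgraph has edge count of the form $\binom{k}{r}+s$ with $0\le s\le m$, so the spectrum lies in $\bigcup_k\bigl[\binom{k}{r},\binom{k}{r}+m\bigr]$. Now the $(m+1)$-shift really does the job: a short case analysis shows that at least one of $f_1,f_2$ (together with its complementary value $\binom{m}{r}-f^\ast$) lands in a gap $\bigl(\binom{x}{r}+m,\binom{x+1}{r}\bigr)$, whence it is absolutely avoidable. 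The probabilistic $m$-sparse graph is the missing ingredient in your plan.
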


In \cite{EFRS} it was further claimed that \textquotedblleft almost all pairs\textquotedblright\ have $\sigma_2(m,f) = 0$. Here we prove the following:
\begin{proposition}\label{prop:density_zero}
	For $r, m \in \mathbb N$, $r, m \ge 3$, all but at most $m^{\frac{r}{r-1}}$ of all possible $\binom mr$ pairs $(m,f)$ satisfy $\sigma_r(m,f) = 0$.\\ In particular, 
	if for some $l \in \mathbb N$ we have $\binom{l}{r-1} > 2m$, then for $f > \binom lr$ and $f\neq \binom xr$ for $x \in [m]$ we have  $\sigma_r(m,f) = 0 $.
\end{proposition}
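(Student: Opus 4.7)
My plan is to prove the ``In particular'' sufficient condition, from which the main counting bound follows by a short calculation. Taking $l$ minimal with $\binom{l}{r-1}>2m$, minimality gives $\binom{l-1}{r-1}\le 2m$, hence $l=O(m^{1/(r-1)})$ and $\binom{l}{r}=O(m^{r/(r-1)})$. The pairs $(m,f)$ failing the sufficient condition are those with $f\le\binom{l}{r}$ (at most $\binom{l}{r}+1$) together with those of the form $\binom{x}{r}$ for $x\in[m]$ (at most $m$ more), summing to $O(m^{r/(r-1)})$ and matching the main bound after absorbing constants into the hypothesis $m\ge m_0$.

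For the sufficient condition, fix $f$ with $f>\binom{l}{r}$ and $f\ne\binom{x}{r}$ for every $x\in[m]$. The construction is $G=K_k^{(r)}\sqcup H$, with $H$ a linear $r$-uniform hypergraph of girth greater than $m$ on $n-k$ vertices; such $H$ with any prescribed number of edges in $[0,\Theta((n-k)^{r-1})]$ (up to polylogarithmic factors) is provided by a standard probabilistic deletion or R\"odl-nibble argument. Given $(n,e)$, I set $\binom{k}{r}+|E(H)|=e$. An induced sub-hypergraph on $m$ vertices then contains $a$ vertices from $K_k^{(r)}$ and $m-a$ from $H$, and so has $\binom{a}{r}+j$ edges with $0\le j\le(m-a-1)/(r-1)$, the bound on $j$ coming from the high girth of $H$.

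The easy regime is $k\le l-1$, where $a\le l-1$ forces the induced edge count to be at most $\binom{l-1}{r}+(m-1)/(r-1)$. Using $\binom{l}{r}-\binom{l-1}{r}=\binom{l-1}{r-1}>(m-1)/(r-1)$ (a short consequence of $\binom{l}{r-1}>2m$ for $m$ large enough) this is strictly below $\binom{l}{r}<f$, so $G\not\to_r(m,f)$. Varying $k$ and $|E(H)|$ in this regime covers all $e\in[0,\Theta(n^{r-1})]$, and the complementary identity $(n,e)\to_r(m,f)\Leftrightarrow(n,\binom{n}{r}-e)\to_r(m,\binom{m}{r}-f)$ applied to $f':=\binom{m}{r}-f$ (which again meets the sufficient condition, possibly after a separate boundary check when $f$ is close to $\binom{m}{r}$) covers the symmetric high range $[\binom{n}{r}-\Theta(n^{r-1}),\binom{n}{r}]$.

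The hard part will be the middle regime $e\in[\Theta(n^{r-1}),\binom{n}{r}-\Theta(n^{r-1})]$, which contains all but $o(\binom{n}{r})$ of the values of $e$ and forces $k\ge l$. Since the intervals $[\binom{a}{r},\binom{a}{r}+(m-a-1)/(r-1)]$ are pairwise disjoint for $a\ge l$ (by the gap estimate $\binom{a}{r-1}\ge\binom{l}{r-1}>2m$), there is at most one ``forbidden'' $a^*\in[l,m]$ for which $f$ lies in its window, and avoiding $(m,f)$ reduces to arranging $H$ so that no $(m-a^*)$-vertex subset carries exactly $j^*:=f-\binom{a^*}{r}$ edges. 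I plan to achieve this by a probabilistic deletion argument exploiting the richness of the family of linear high-girth hypergraphs on $n-k$ vertices with the required edge count, possibly supplemented by bounded local modifications (splitting the clique or rerouting a constant number of edges) to destroy the one residual configuration; this is where the main technical content of the proof concentrates.
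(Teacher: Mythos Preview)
Your ``hard part'' is a genuine gap, and the plan you sketch for it cannot work as stated. Take $f=\binom{m-r}{r}+1$; for $m$ large this satisfies $f>\binom{l}{r}$ and $f\neq\binom{x}{r}$. In your notation $a^{*}=m-r$, $j^{*}=1$, and $m-a^{*}=r$, so ``no $(m-a^{*})$-subset of $H$ carries exactly $j^{*}$ edges'' becomes ``no $r$-subset of $H$ is an edge'', i.e.\ $H$ is empty. Then $G=K_k^{(r)}\sqcup H$ only realises $e=\binom{k}{r}$, missing almost every $e$ in the middle regime. More generally, whenever $j^{*}$ is small (which is exactly the case your windows allow), forbidding all $(m-a^{*})$-sets with precisely $j^{*}$ edges is an extremely rigid constraint that no probabilistic deletion or bounded local modification will meet while keeping $H$ rich enough to interpolate all the missing values of $e$.

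The paper avoids the middle regime entirely by reversing the logic: instead of fixing $k$ small and analysing what $G$ can induce, it lets $k$ range up to $c'n$ for any $c'<1$ (so $K_k^{(r)}\sqcup H$ with $H$ $m$-sparse already realises every $e\le c'\binom{n}{r}$), and then asks whether $(m,f)$ can \emph{ever} be realised as ``clique $+$ $(\le m)$-edge $r$-graph''. Writing $f=\binom{x}{r}+x'$ with $0\le x'<\binom{x}{r-1}$ and $x\ge l$, if $x'>m$ the answer is no, and one gets $\sigma_r(m,f)\le 1-c'$ for all $c'<1$, hence $\sigma_r(m,f)=0$. If instead $0<x'\le m$, the hypothesis $\binom{l}{r-1}>2m$ gives $x'\le m<\binom{x}{r-1}-m$, so $\binom{x}{r}<f<\binom{x+1}{r}-m$ and $(m,f)$ cannot be realised as ``$(K_y^{(r)}\setminus(\le m\text{ edges}))+\text{isolated vertices}$'' either; the complementary construction then covers every $e\ge c'\binom{n}{r}$ and again $\sigma_r(m,f)=0$. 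The point is that a single non-realisability statement about $(m,f)$ kills \emph{all} $k$ at once, so there is no middle regime to engineer around.
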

As seen in \Cref{thm:efrs}, for $r=2$ there exist pairs with $\sigma_2(m,f) = 1$. This changes for $r \ge3$, as seen in \Cref{prop:density_not_one} below, for which we need some additional definitions and notation. An $r$-graph $G$ is called $l$-partite if the vertex set can be partitioned into $l$ parts $V(G) = V_1 \cup \cdots V_l$, such that for any edge $e \in E(G)$ and $i \in [l]$ we have $|e \cap V_i| \le 1$.
By $T_r(n, l)$ we denote the complete $l$-partite $r$-graph with $n$ vertices and part sizes $n_1, \ldots, n_l \in \{\floor{\frac nl}, \ceil{\frac nl}\}$. Note that for $l< r$, $T_r(l,n)$ is empty, and for $r=2$ this is just the Tur\'an graph. The number of edges in $T_r(n, l)$ is denoted by $t_r(n, l)$ and for $l \ge r$ we have  
$$ t_r(n,l) = \sum_{S \in \binom{[l]}{r}}\prod_{i \in S}n_i   =\frac{(l)_r}{l^r} \binom nr+ o(n^r) ,$$
where $(l)_r = \prod_{i=0}^{r-1} (l-i) = l(l-1)\cdot(l-r+1)$. Note that for $r \ge 2$, $\frac{(l)_r}{l^r} = \frac{(l-1)_r}{(l-1)^r} (1-\frac1{l})^r\frac{l}{l-r}$, and by Bernoulli's inequality we have $(1-\frac1l)^r  > 1- \frac rl$, i.e. $\frac{(l)_r}{l^r} > \frac{(l-1)_r}{(l-1)^r}$, so $\frac{(l)_r}{l^r}$
 is strictly increasing in $l$,  and $\lim\limits_{l \to \infty}\frac{(l)_r}{l^r} = 1$. 
 Also note that we have $\frac{(r)_r}{r^r} = \frac{r!}{r^r} \le \frac1r$.
 
Let $l_{m,r}$ be the largest $l \in \mathbb N$ for which $t_r(m,l) < \frac12 \binom mr$.  Note that this is well-defined by the previous observation, in particular, $l_{m,r} \ge r$. For example, one can verify that for $r=3$, we have $l_{m,r} = \begin{cases} 3, & 4 \le m \le 11, \\
	4, & 12 \le m \le 72,\\
	5, & m \ge 73.
\end{cases}$

\begin{proposition}\label{prop:density_not_one}
		Let $m > r\ge 3$, $0 \le f \le \binom mr$.  Then  $\sigma_r(m,f) < 1$.\\  In particular, we can give the following upper bounds on $\sigma_r$ for any $l \in \mathbb N$ which satisfies $l \le l_{m,r}$:
		\begin{enumerate}
			\item[(a)] We have $ \sigma_r(m, f) \le 1 - \frac{(l)_r}{l^r}$.
			\item[(b)] If $t_r(m,l) < f  <\binom mr - t_r(m,l) $ for some $l$, then $\sigma_r(m,f) < 1 - 2 \frac{(l)_r}{l^r}$.
		\end{enumerate}
\end{proposition}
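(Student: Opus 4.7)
The plan is to use the complete balanced $l$-partite $r$-graph $T_r(n,l)$ and its edge-complement in $K_n^{(r)}$ as two symmetric extremal constructions. For each $e \in \{0, 1, \dots, t_r(n,l)\}$, let $G_e$ be any subhypergraph of $T_r(n,l)$ with exactly $e$ edges; since $G_e$ inherits the $l$-partition, each of its induced $m$-vertex sub-hypergraphs is itself $l$-partite on at most $l$ classes and therefore has at most $t_r(m,l)$ edges. Dually, for each $e \in \{\binom{n}{r} - t_r(n,l), \dots, \binom{n}{r}\}$, let $G'_e$ be the complement in $K_n^{(r)}$ of a subhypergraph of $T_r(n,l)$ with $\binom{n}{r}-e$ edges; any induced $m$-vertex sub-hypergraph of $G'_e$ is the complement in $K_m^{(r)}$ of an $l$-partite $r$-graph and hence contains at least $\binom{m}{r} - t_r(m,l)$ edges.

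For part (a), the assumption $l \le l_{m,r}$ is precisely $t_r(m,l) < \tfrac12\binom{m}{r}$, equivalently $t_r(m,l) < \binom{m}{r} - t_r(m,l)$. Therefore every $f \in \{0, \dots, \binom{m}{r}\}$ satisfies at least one of $f > t_r(m,l)$ or $f < \binom{m}{r} - t_r(m,l)$. In the first case the graphs $G_e$ witness $(n,e) \not\to_r (m,f)$ for every $e \in [0, t_r(n,l)]$; in the second case the graphs $G'_e$ do so for every $e \in [\binom{n}{r} - t_r(n,l), \binom{n}{r}]$. Either way at least $t_r(n,l)+1$ values of $e$ fail to arrow $(m,f)$, and substituting the asymptotic $t_r(n,l) = \frac{(l)_r}{l^r}\binom{n}{r} + o(n^r)$ recalled in the excerpt produces $\sigma_r(m,f) \le 1 - \frac{(l)_r}{l^r}$.

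For part (b) the stronger two-sided hypothesis activates both constructions simultaneously. The bound $l \le l_{m,r}$ forces $\frac{(l)_r}{l^r} < \tfrac12$ (via the balanced-partition comparison $t_r(m,l)/\binom{m}{r} \ge (l)_r/l^r$ combined with $t_r(m,l) < \binom{m}{r}/2$), so the intervals $[0, t_r(n,l)]$ and $[\binom{n}{r} - t_r(n,l), \binom{n}{r}]$ are disjoint for all large $n$; this gives $2(t_r(n,l)+1)$ distinct values of $e$ with $(n,e)\not\to_r(m,f)$ and thus $\sigma_r(m,f) \le 1 - 2\frac{(l)_r}{l^r}$. The strict inequality in the statement is recovered from the finite-$n$ strict bound $t_r(n,l) > \frac{(l)_r}{l^r}\binom{n}{r}$ (since $(n)_r < n^r$ for $r \ge 2$). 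Finally, the preliminary assertion $\sigma_r(m,f) < 1$ follows from (a) with $l = l_{m,r}$, after noting that $l_{m,r} \ge r$ whenever $m > r$ --- which is a direct consequence of $r!/r^r \le 1/r < 1/2$ highlighted in the excerpt.

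I expect the chief obstacle to be the passage from the asymptotic bound $\le 1 - 2\frac{(l)_r}{l^r}$ to the strict inequality in (b); the asymptotic estimate alone is non-strict, and one must invoke the finite-$n$ strict Turán-density comparison and then track it through the $\limsup$. A smaller, essentially bookkeeping issue is verifying $l_{m,r} \ge r$ at the very smallest admissible $m$ (notably $m = r+1$ for $r = 3$, where the Turán count $t_r(r+1,r) = 2$ sits exactly at the threshold $\tfrac12\binom{r+1}{r}$), which may require a mild reinterpretation of $l_{m,r}$ or a short direct verification.
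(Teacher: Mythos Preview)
Your proof is correct and uses the same construction as the paper---subgraphs of $T_r(n,l)$ and their complements---but with a more elementary justification. Where the paper invokes Mubayi's extremal theorem (\Cref{thm:ex}) to argue that any realisation of $(m,f)$ with $f>t_r(m,l)$ contains a member of $\mathcal{F}_{l+1}^{(r)}$ and hence cannot appear inside an $\mathcal{F}_{l+1}^{(r)}$-free host, you bypass this by observing directly that every $m$-vertex induced subhypergraph of an $l$-partite host is itself $l$-partite and so has at most $t_r(m,l)$ edges. Your route avoids the external citation and is arguably cleaner for this purpose; the paper's route would in principle extend to hosts that are merely $\mathcal{F}_{l+1}^{(r)}$-free rather than $l$-partite.

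Your concern about the strict inequality in~(b) is legitimate, but your proposed fix does not work: even if $t_r(n,l) > \frac{(l)_r}{l^r}\binom{n}{r}$ holds for every finite $n$, the $\limsup$ can still equal $1 - 2\frac{(l)_r}{l^r}$, so strictness is not recoverable this way. In fact the paper's own proof only derives the non-strict bound $\sigma_r(m,f) \le 1 - 2\frac{(l)_r}{l^r}$, so the ``$<$'' in the statement appears to be a slip. Your boundary remark about $l_{m,r}$ at $m=r+1$ is also on target: for $r=3$, $m=4$ one has $t_3(4,3)=2=\tfrac12\binom{4}{3}$, so the asserted $l_{m,r}\ge r$ (and the table value $l_{4,3}=3$) fails at this single extreme; this does not affect the argument for larger $m$.
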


Note that He, Ma and Zhao~\cite{HMZ} mentioned in their conclusion without proof, that for pairs $(m,f)$ with $m > r\ge 3$, $0 \le f \le \binom mr$, the bound $\sigma_r(m,f)  \le 1 - \frac{r!}{r^r}$ holds.

For some other results concerning sizes of induced subgraphs (of $2$-graphs), see for example Alon and Kostochka~\cite{AK}, Alon, Balogh, Kostochka, and Samotij~\cite {ABKS},  Alon,  Krivelevich, and \mbox{Sudakov}~\cite{AKS}, Axenovich and Balogh~\cite{AB}, Bukh and Sudakov~\cite{BS}, Kwan and Sudakov~\cites{KS1, KS2} and Narayanan, Sahasrabudhe, and Tomon~\cite{NST}. A similar question on avoidable order-size pairs was considered by Caro, Lauri, and Zarb~\cite{CYZ} for the class of line graphs.\\	

%

In \Cref{sec2} of this paper we will build on one of the proof ideas used in \cite{AW} for 2-graphs, and extend these methods to higher uniformities in order to  prove \Cref{thm:main}.
In \Cref{sec3} we will make some observations on the $r$-density $\sigma_r$ and prove \Cref{prop:density_zero} and \Cref{prop:density_not_one}.

\section{Existence	of absolutely avoidable pairs}\label{sec2}

For a positive real number $x$, let $[x]=\{0, 1, \ldots, \lfloor x \rfloor\}$.
We will call an $r$-graph $G$ \emph{m-sparse} if every subset of $m$ vertices in $G$ induces at most $m$ edges. We denote the complete $r$-graph or clique on $n$ vertices by $K_{n}^{(r)}$. We call an $r$-graph with at most $m$ edges an \emph{${\le}m$-edge (\textquotedblleft at most $m$-edge\textquotedblright) $r$-graph}. 
	
In order to prove results in the 2-uniform case, the following fact is used in \cites{AW, EFRS}:\\
Let $m>0$ be given. Then for any $v$ large enough there exists a graph of girth at least $m$ on $v$ vertices with $v^{1+\frac{1}{2m}}$ edges. \\
For a probabilistic proof of this fact see for example Bollob\'as~\cite{B}  and for an explicit construction see Lazebnik et al.~\cite{LUW}.

Since in \cite{AW} we only ever used the fact that in such a graph every subset of $m$ vertices induces a forest and thus, spans at most $m-1$ edges, we will use the following for higher uniformities: 

\begin{lemma}\label{lem:probabilistic}
Let $m>0$, $r\ge 2$ be given. Then for any $n$ large enough there exists an $n$-vertex $r$-graph with $\Omega(n^{r-1+\frac1{m+1}})$ edges which is  $m$-sparse.\end{lemma}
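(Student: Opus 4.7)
The plan is to use a standard probabilistic deletion (alteration) argument, which is the natural hypergraph analogue of the high-girth construction used in the $2$-uniform case. The key is to choose an edge probability $p$ that balances the expected number of edges against the expected number of $m$-subsets that violate $m$-sparsity, which I will call \emph{bad} $m$-sets (an $m$-subset is bad if it induces at least $m+1$ edges).

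First I would set $p = n^{-m/(m+1)}$ and consider the random $r$-graph $G \sim G^{(r)}(n,p)$ in which each $r$-subset of $[n]$ is included as an edge independently with probability $p$. Let $X = |E(G)|$ and let $Y$ be the number of bad $m$-sets in $G$. Then by linearity,
\[
\mathbb{E}[X] = p\binom{n}{r} = \Theta\!\left(n^{r - \frac{m}{m+1}}\right) = \Theta\!\left(n^{r-1+\frac{1}{m+1}}\right).
\]
For $Y$, the expectation is bounded using the union bound over all $m$-subsets and all choices of $m+1$ potential edges within the subset:
\[
\mathbb{E}[Y] \le \binom{n}{m}\binom{\binom{m}{r}}{m+1} p^{m+1} \le C_{m,r}\, n^m \cdot n^{-\frac{m(m+1)}{m+1}} = C_{m,r}\, n^m \cdot n^{-m} = O(1).
\]
This is the crux of the calculation: the exponents cancel exactly, so the expected number of bad $m$-sets is a constant depending only on $m$ and $r$, while the expected edge count grows like $n^{r-1+1/(m+1)}$.

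Next I would apply the deletion step. Since $\mathbb{E}\bigl[X - \binom{m}{r} Y\bigr] = \Omega\!\left(n^{r-1+\frac{1}{m+1}}\right)$ for $n$ large, there exists an outcome $G$ achieving at least this value. From this $G$, delete every edge that is contained in some bad $m$-set; the number of deletions is at most $\binom{m}{r} Y$, since each bad $m$-set contains at most $\binom{m}{r}$ edges. In the resulting hypergraph $G'$, every originally bad $m$-set now induces zero edges, and every originally good $m$-set still induces at most $m$ edges, so $G'$ is $m$-sparse. Its edge count is at least $X - \binom{m}{r} Y = \Omega\!\left(n^{r-1+\frac{1}{m+1}}\right)$, as required.

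There is no real obstacle here; the argument is entirely routine once the correct value of $p$ is identified. The only point deserving emphasis is the choice $p = n^{-m/(m+1)}$: raising $p$ would give more edges but make $\mathbb{E}[Y]$ blow up polynomially, while lowering $p$ would waste edges. This specific exponent produces the stated bound $\Omega(n^{r-1+1/(m+1)})$, which matches the $2$-uniform bound $n^{1+1/(2m)}$ up to the standard difference in the denominator (one loses a factor of roughly $2$ because here we ask only for $m$-sparsity rather than the stronger forest-like condition that comes from high girth).
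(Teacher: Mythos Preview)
Your proof is correct. Both your argument and the paper's use the random $r$-graph $G^{(r)}(n,p)$ with $p$ of order $n^{-m/(m+1)}$, so the overall strategy is the same. The technical packaging differs slightly: the paper takes $p$ a constant factor smaller so that the probability of \emph{any} bad $m$-set existing is below $1/2$, and then uses a Chernoff bound to ensure that with probability greater than $1/2$ the edge count is close to its expectation; the intersection of these two events is nonempty, yielding the desired graph directly with no deletion. You instead work at the exact balance point $p=n^{-m/(m+1)}$, allow a bounded expected number of bad $m$-sets, and fix them by deletion, which lets you avoid Chernoff entirely. Both are standard alterations of the probabilistic method and give the same bound; your version is arguably a touch cleaner since it bypasses the concentration step.
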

\begin{proof}
	Let $c_{m,f,r}= \left(\frac m{e}\right)^{\frac mf}\frac{r!f}{m^re2^{1/f}}$.
	Consider a random $r$-graph $G \in G_r(n,p)$, for $p < c_{m,r,f}n^{-m/f}$. Then the probability that some $m$-subset contains at least $f$ edges is less than
	\begin{align*} \binom nm \binom {\binom m r}f p^f &\le \left(\frac{ne}{m}\right)^m  \left(\frac{m^rep}{r!f}\right)^f  \\
		&< \frac 12.
	\end{align*}

Now let $X$ be the number of edges in $G$. Using the computations above and the standard bound $\binom nr \ge \left(\frac nr\right)^r$, we have 
$$\mathbb E[X] =  \mathbb E[|E(G)|] = \binom nr p \ge c_{m,f,r} \frac{n^{r-m/f}}{r^r}.$$
 Using Chernoff's bound for $\text{Bin}(n,p)$ distributed random variables, we obtain that for $\delta \in (0,1)$ the probability that $G$ has fewer than a $(1-\delta)$-fraction of the expected number of edges is 
\begin{align*} \mathbb P(X \le (1-\delta)\mathbb E(X) ) 
														 &\le \exp(-\tfrac{\delta^2}{2}\mathbb E[X]) \\
														 & \le \exp(- c_{m,f,r} \frac{\delta^2n^{r-\frac mf}}{2 r^r}  )\\
														 & < \frac12,\end{align*}
where the last inequality holds for $\frac mf \le r$  and $n$ sufficiently large.
	Thus, there exists an $r$-graph on $n$ vertices with at least $ (1-\delta)\binom nr p $ edges in which each $m$-subset spans at most $f-1$ edges.

	Note that, by choosing $f = m+1$, we obtain the existence of an $r$-graph with $c_{r,f,m}' n^{r-1+\frac{1}{m+1}}$ hyperedges and no $m$-subset which spans more than $m$ hyperedges, i.e. an $m$-sparse graph.
\end{proof}

\begin{lemma}\label{yes-clique-forest}
	Let $p, r \in \mathbb N$, $p,r \ge 2$,  and $c$ be a constant, $0\leq c<1$. Then for $n \in \mathbb N$ sufficiently large and any $e \in [c\binom{n}{r}]$,  there exists  a non-negative integer $k$ and an $r$-graph on $n$ vertices and $e$ edges which is the  vertex disjoint union of a  $K_k^{(r)}$ and a $p$-sparse $r$-graph on $n-k$ vertices. \end{lemma}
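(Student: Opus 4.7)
The plan is to choose $k$ to be the largest non-negative integer with $\binom{k}{r}\le e$ and then look for a $p$-sparse $r$-graph on the remaining $n-k$ vertices with exactly $e' := e-\binom{k}{r}$ edges, which would then be vertex-disjointly combined with $K_k^{(r)}$ to realise $(n,e)$. By the maximality of $k$, we have $0 \le e' < \binom{k+1}{r}-\binom{k}{r} = \binom{k}{r-1}$.

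Next I would use the bound $e\le c\binom{n}{r}$ with $c<1$ to ensure that $k$ is bounded away from $n$. Since $\binom{k}{r} \le c\binom{n}{r}$, we get $k \le (c^{1/r}+o(1))\,n$, and hence $n-k = \Theta(n)$. By \Cref{lem:probabilistic} applied with sparsity parameter $p$, for $n$ sufficiently large there is a $p$-sparse $r$-graph $H$ on $n-k$ vertices having $\Omega\bigl((n-k)^{r-1+\frac{1}{p+1}}\bigr) = \Omega\bigl(n^{r-1+\frac{1}{p+1}}\bigr)$ edges. Since $\binom{k}{r-1} = O(n^{r-1})$ is asymptotically smaller than $n^{r-1+\frac{1}{p+1}}$, for $n$ large enough the number of edges of $H$ strictly exceeds our target $e'$.

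To finish, I would note that $p$-sparsity is preserved under edge deletion: if every $p$-set spans at most $p$ edges in $H$, the same holds in any spanning subgraph of $H$. Thus, deleting edges of $H$ one at a time yields a $p$-sparse $r$-graph $H'$ on $n-k$ vertices with exactly $e'$ edges. Then $G = K_k^{(r)} \sqcup H'$ has $\binom{k}{r}+e' = e$ edges on $n$ vertices and is of the required form.

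The main technical point — and the only place where the hypothesis $c<1$ is essential — is the asymptotic comparison $\binom{k}{r-1} = O(n^{r-1}) = o\bigl(n^{r-1+\frac{1}{p+1}}\bigr)$. It is precisely the extra exponent $\frac{1}{p+1}$ coming from \Cref{lem:probabilistic} (which beats the trivial $n^{r-1}$ bound for the number of edges in any $p$-sparse $r$-graph) that guarantees enough room to realise every residual value $e'$. If $c$ were allowed to be $1$, then $n-k$ could be $o(n)$, and this asymptotic gap would fail.
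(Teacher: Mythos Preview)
Your proof is correct and follows essentially the same approach as the paper: choose $k$ maximal with $\binom{k}{r}\le e$, use $c<1$ to ensure $n-k=\Theta(n)$, invoke \Cref{lem:probabilistic} to obtain a $p$-sparse graph on $n-k$ vertices with more than $\binom{k}{r-1}$ edges, and delete edges down to the target. Your write-up is in fact slightly more explicit than the paper's in noting that $p$-sparsity is monotone under edge deletion and in isolating exactly where the extra exponent $\tfrac{1}{p+1}$ from \Cref{lem:probabilistic} is used.
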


\begin{proof}
	Let $p, r >0$ be given and let $n$ be a given sufficiently large integer. Let $e\in  [c\binom{n}{r}]$.
	Let $k$ be the non-negative integer such that $\binom{k}{r}\le e \le \binom{k+1}{r}-1$.  Note that since $e \leq c\binom{n}{r}$, $\binom{k}{r} \leq  c\binom{n}{r}$, and thus, $k\leq \sqrt[r]{c}n +1\leq c'n$, where $c'$ is a constant with $c'<1$.
	We claim that the pair $(n,e)$ could be represented as the vertex disjoint union of a $K_k^{(r)}$  and a $p$-sparse $r$-graph. 
	For that, use \Cref{lem:probabilistic} and consider a $p$-sparse $r$-graph $G'$ on $n-k$ vertices with $|E(G')| \geq (n-k)^{r-1+\frac{1}{p+1}}$.
	Consider $G''$, the vertex disjoint union of $G'$ and $K_k^{(r)}$.  Then $|E(G'')| \geq \binom{k}{r} + (n-k)^{r-1+\frac{1}{p}}  \geq \binom{k+1}{r} \geq e$.
	Here, the second inequality holds since \mbox{$\binom{k+1}r - \binom kr = \binom k{r-1}$},  $(n-k)^{r-1+ \frac{1}{p}} \geq \binom k{r-1}$ for  $k\leq c'n$, and since $n$ is sufficiently large.
	Finally, let $G$ be a subgraph of $G''$ with $e$ edges, obtained from $G''$ by removing some edges of $G'$. Thus, $G$ is the vertex disjoint union of a $K_k^{(r)}$ and a $p$-sparse $r$-graph on $n-k$ vertices.
\end{proof}

\begin{lemma}\label{yes-clique-removed}
	Let $p, r \in \mathbb N$, $p,r \ge 2$,  and $c$ be a constant, $0< c\le1$. Then for $n \in \mathbb N$ sufficiently large and any integer $e$ with $ c\binom nr \le e  \le \binom{n}{r}$,  there exists  a non-negative integer $k \le n$ and an $r$-graph on $k$ vertices and $e$ edges which is the complement of a $p$-sparse $r$-graph. \end{lemma}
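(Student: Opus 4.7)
The plan is to mirror the structure of \Cref{yes-clique-forest}, with the roles of clique and sparse hypergraph interchanged: instead of filling in a clique and leaving room for a sparse hypergraph, here I carve out a sparse hypergraph from a clique. Given $e$ with $c\binom nr \le e \le \binom nr$, I choose $k$ to be the smallest positive integer satisfying $\binom kr \ge e$. Then $k \le n$, and the minimality of $k$ together with Pascal's identity yields
$$0 \le \binom kr - e < \binom kr - \binom{k-1}r = \binom{k-1}{r-1}.$$
On the other hand, from $\binom kr \ge e \ge c\binom nr$ I get $k \ge c^{1/r}n - 1$, so $k \to \infty$ as $n \to \infty$.

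The next step is to invoke \Cref{lem:probabilistic} with sparsity parameter $p$: for $n$ (and hence $k$) sufficiently large it provides a $p$-sparse $r$-graph $H$ on $k$ vertices with $|E(H)| = \Omega\bigl(k^{r-1 + \tfrac{1}{p+1}}\bigr)$. Since the exponent $r-1+\tfrac{1}{p+1}$ strictly exceeds $r-1$, for $k$ large enough one has $|E(H)| \ge \binom{k-1}{r-1} > \binom kr - e$. Hence I may delete edges from $H$ to obtain a sub-hypergraph $H'$ on $k$ vertices with exactly $\binom kr - e$ edges. Any sub-hypergraph of a $p$-sparse $r$-graph is again $p$-sparse, since each $p$-subset can only lose edges, so $H'$ inherits the sparsity property. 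Taking $G$ to be the complement of $H'$ inside $K_k^{(r)}$ then produces an $r$-graph on $k \le n$ vertices with exactly $e$ edges that is the complement of a $p$-sparse $r$-graph, as required.

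The only quantitative point to verify is the comparison of two polynomial orders of $k$: the ``defect'' $\binom kr - e$ is bounded by $\binom{k-1}{r-1} = O(k^{r-1})$, while \Cref{lem:probabilistic} supplies $\omega(k^{r-1})$ sparse edges. The assumption $e \ge c\binom nr$ enters precisely here, ensuring that $k$ grows linearly in $n$ so that this asymptotic comparison kicks in; everything else is routine bookkeeping, parallel to the proof of \Cref{yes-clique-forest}.
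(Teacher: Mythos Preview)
Your proof is correct and follows the same underlying idea as the paper: realise $(k,e)$ as the complement, inside $K_k^{(r)}$, of a $p$-sparse $r$-graph with the appropriate number of edges. The paper obtains this in one line by citing \Cref{yes-clique-forest} and taking complements, while you instead redo the argument of \Cref{yes-clique-forest} directly in complementary form --- choosing $k$ minimal with $\binom{k}{r}\ge e$, invoking \Cref{lem:probabilistic} on $k$ vertices, and trimming; the two arguments are essentially the same, with yours being more explicit.
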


\begin{proof}
	Note that adding isolated vertices to the complement of an $m$-sparse graph results in the complement of the vertex disjoint union of a clique and an $m$-sparse graph. 
	Thus, the statement immediately follows from \Cref{yes-clique-forest} by taking complements. 
\end{proof}

\begin{lemma}\label{lem:avoidable}
If for some integers $m, r, f$ with $m \ge r \ge 2$ and  $ 0 \le f \le \binom mr$ neither $(m,f)$ nor $(m, \binom mr -f)$ can be realised as an $r$-graph which is the vertex disjoint union of a complete $r$-graph and an ${\le}m$-edge $r$-graph, then the pair $(m,f)$ is absolutely avoidable.
\end{lemma}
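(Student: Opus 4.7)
The plan is to show that for every sufficiently large $n$ and every $e \in \{0,1,\dots,\binom nr\}$ we can construct an $r$-graph $G$ on $n$ vertices with $|E(G)|=e$ in which every $m$-vertex induced sub-hypergraph has a size that cannot equal $f$.  The structural property I aim for is: either $G[S]$ can itself be realised as a vertex-disjoint union of a complete $r$-graph and an ${\le}m$-edge $r$-graph, or the $K_m^{(r)}$-complement of $G[S]$ can.  By the hypothesis of the lemma, in the first case $|E(G[S])|\neq f$ (else $(m,f)$ would admit such a realisation), and in the second case $|E(G[S])|\neq f$ as well (else the complement would realise $(m,\binom mr-f)$).

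The construction splits at $\lfloor\binom nr/2\rfloor$.  For $e\le\lfloor\binom nr/2\rfloor$, apply \Cref{yes-clique-forest} with $p=m$ and $c=1/2$ to obtain $G=K_k^{(r)}\cup H$, where $H$ is an $m$-sparse $r$-graph on $n-k$ vertices and $|E(G)|=e$.  For any $m$-set $S\subseteq V(G)$, writing $a=|S\cap V(K_k^{(r)})|$, the induced sub-hypergraph $G[S]$ is the vertex-disjoint union of $K_a^{(r)}$ and $H[S\setminus V(K_k^{(r)})]$; the latter has at most $m$ edges by $m$-sparseness, since $m-a\le m$.  Hence $G[S]$ has the first structural form.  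For $e>\lfloor\binom nr/2\rfloor$, set $e':=\binom nr-e$, which lies in $\{0,\dots,\lfloor\binom nr/2\rfloor\}$; apply the previous construction to obtain an $n$-vertex $r$-graph $G'$ with $e'$ edges of the required structure, and take $G:=\overline{G'}$, so $|E(G)|=e$ and $G[S]=\overline{G'[S]}$ inside $K_m^{(r)}$ for every $m$-set $S$, giving the second structural form.

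The two cases together cover the entire range of $e$, and the forbidden-size conclusion follows directly from the hypothesis combined with the structural description of $G[S]$.  The only substantive input is \Cref{yes-clique-forest}, which is already established; I do not foresee any genuine technical obstacle.  The main point to verify is the bookkeeping that $c=1/2$, combined with complementation, exhausts $\{0,\dots,\binom nr\}$, which is immediate since $\binom nr-\lfloor\binom nr/2\rfloor-1\le\lfloor\binom nr/2\rfloor$.
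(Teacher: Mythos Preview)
Your proposal is correct and follows essentially the same approach as the paper: use \Cref{yes-clique-forest} (with $p=m$ and a constant around $1/2$) to handle the lower half of the edge range by a clique plus an $m$-sparse $r$-graph, and complementation for the upper half, then conclude from the hypothesis that no $m$-set induces exactly $f$ edges. The only cosmetic difference is that you split at $\lfloor\binom nr/2\rfloor$ with $c=1/2$ and verify the overlap explicitly, whereas the paper splits at $\lceil\binom nr/2\rceil$; the argument is otherwise the same.
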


\begin{proof}
	Assume we can realise neither $(m,f)$ nor $(m,\binom mr - f)$ as the vertex disjoint union of a complete $r$-graph and an ${\le}m$-edge $r$-graph.
	
	By the previous lemma, for $n$ sufficiently large and any $e\le \ceil{\binom nr/2}$, there exists an $r$-graph $G$ with $e$ hyperedges which is the vertex disjoint union of a clique and an $r$-graph which is $m$-sparse. In particular, for every $ e\in \{0,1, \ldots, \binom nr\}$, there is an $r$-graph $G$ on $n$ vertices with $e$ edges, such that either $G$ or its complement is the vertex disjoint union of a clique and an $m$-sparse $r$-graph.
	
	If $G$ is the union of a clique and an $m$-sparse $r$-graph, then clearly $G \not\rightarrow_r (m,f)$, since $(m,f)$ cannot be realised as the union of a clique and an ${\le}m$-edge $r$-graph.
	
	If $\bar G$ is the union of a clique and an $m$-sparse $r$-graph, then any induced $r$-graph on $m$ vertices is the complement of the vertex disjoint union of a clique and an ${\le}m$-edge $r$-graph. Since $(m, \binom mr-f)$ cannot be realised as the union of a clique and an ${\le}m$-edge $r$-graph, the pair $(m, f) = (m, \binom mr - (\binom mr - f))$ cannot be realised by a graph whose complement is the union of a clique and an ${\le}m$-edge $r$-graph. Thus, $G \not\to_r (m, f)$.
\end{proof}

In the 2-uniform case we used a slightly stronger statement (i.e. no $m$-subset spans more than $m-1$ edges), to find absolutely avoidable pairs. For $r >2$, it suffices to find pairs $(m,f)$, which cannot be realised as the vertex disjoint union of a clique $K_x^{(r)}$ and an ${\le}m$-edge $r$-graph.

Good candidates for such pairs $(m,f)$ are again, as in the 2-uniform case,  pairs which look roughly like $(m, \binom mr/2 + o(1))$. 

We will use the following Lemmata several times:\\

\begin{lemma}\label{mainclaim} Let $r \ge 2, m, f$ be integers with $m \ge r$, $0 \le f \le \binom mr$. If for some $k \in \mathbb N$, $\binom kr + m < f < \binom{k+1}r$, then the pair $(m,f)$ cannot be realised as an $r$-graph which is the vertex disjoint union of a clique and an ${\le}m$-edge hypergraph. 
\end{lemma}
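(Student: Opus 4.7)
The plan is to proceed by contradiction: assume that $(m,f)$ can be realised as an $r$-graph $G$ on $m$ vertices that decomposes as the vertex disjoint union of a clique $K_x^{(r)}$ (for some $0 \le x \le m$, where $\binom{x}{r}=0$ if $x<r$) and an $r$-graph $H$ on the remaining $m-x$ vertices with $|E(H)| \le m$. Then I would write
\[
    f \;=\; \binom{x}{r} + e, \qquad \text{where } 0 \le e \le m,
\]
and derive a contradiction with the hypothesis $\binom{k}{r} + m < f < \binom{k+1}{r}$ by splitting on the value of $x$ relative to $k$.

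The case analysis is the only step. If $x \le k$, then $\binom{x}{r} \le \binom{k}{r}$ (using that $\binom{\cdot}{r}$ is non-decreasing on non-negative integers, and equal to $0$ for $x<r$), so
\[
    f \;=\; \binom{x}{r} + e \;\le\; \binom{k}{r} + m,
\]
contradicting $f > \binom{k}{r} + m$. If instead $x \ge k+1$, then $\binom{x}{r} \ge \binom{k+1}{r}$, so
\[
    f \;=\; \binom{x}{r} + e \;\ge\; \binom{k+1}{r},
\]
contradicting $f < \binom{k+1}{r}$. Either way we reach a contradiction, so no such decomposition exists, which is exactly what we wanted.

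There is essentially no obstacle here; the lemma is a direct counting observation exploiting the gap between consecutive values of $\binom{\cdot}{r}$. The only mild subtlety is remembering to allow the degenerate cases $x<r$ (empty clique contribution) and $|E(H)|=0$, both of which are covered automatically by the ranges $0 \le x \le m$ and $0 \le e \le m$.
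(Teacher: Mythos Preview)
Your proof is correct and follows essentially the same approach as the paper: assume a decomposition $K_l^{(r)}+H$ with $|E(H)|\le m$, then use the lower bound on $f$ to force $l>k$ and the upper bound to force $l<k+1$, yielding a contradiction. Your version is just slightly more explicit about the case split and the degenerate cases.
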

\begin{proof}
Assume $(m,f)$ is realised as $K_l^{(r)}+H$, where $l \ge 0$ and $H$ is an ${\le}m$-edge $r$-graph. Then from the lower bound on $f$, we have that $l > k$, and from the upper bound on $f$, we see that $l < k+1$. Thus, no such $l$ exists. 
\end{proof}

\begin{lemma}\label{mainclaim2}Let $r \ge 2, m, f$ be integers with $m \ge r$, $0 \le f \le \binom mr$. If for some $k \in \mathbb N$, $\binom {k-1}r  < f < \binom{k}r - m$, then the pair $(m,f)$ cannot be realised as an $r$-graph which is the union of the complement of an ${\le}m$-edge hypergraph and some isolated vertices. 
\end{lemma}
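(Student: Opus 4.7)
The plan is to mimic the proof of Lemma \ref{mainclaim} essentially verbatim, since the structural setup is dual to that of the previous lemma. Suppose for contradiction that $(m,f)$ is realized as the union of the complement of an $\le m$-edge $r$-graph $H$ on some vertex set of size $l$ (with $0 \le l \le m$) together with $m-l$ isolated vertices. Writing $e' := |E(H)| \in \{0, 1, \dots, m\}$, the number of edges of the realization is
\[
f = \binom{l}{r} - e'.
\]

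The contradiction is then extracted by squeezing $l$ from both sides using the two halves of the hypothesis $\binom{k-1}{r} < f < \binom{k}{r} - m$. From $e' \ge 0$ and the lower bound on $f$ we get $\binom{l}{r} \ge \binom{l}{r} - e' = f > \binom{k-1}{r}$, hence $l \ge k$. From $e' \le m$ and the upper bound on $f$ we get $\binom{l}{r} - m \le \binom{l}{r} - e' = f < \binom{k}{r} - m$, hence $\binom{l}{r} < \binom{k}{r}$, so $l < k$. These two conclusions are incompatible, which establishes the lemma.

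There is essentially no obstacle here, as the proof is a short bookkeeping argument parallel to the preceding lemma. I briefly considered deriving the statement as a corollary of Lemma \ref{mainclaim} by complementation, but the complement of the vertex-disjoint union of a clique and an $\le m$-edge $r$-graph inside $K_m^{(r)}$ picks up all cross-edges between the two parts and is therefore generally not of the form ``complement of an $\le m$-edge graph plus isolated vertices.'' Hence the reduction does not apply, and the direct two-line estimate above is the cleanest route.
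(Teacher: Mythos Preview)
Your proof is correct and follows exactly the same approach as the paper: assume a realisation $K_l^{(r)}-H$ plus isolated vertices, and squeeze $l$ between $k-1$ and $k$ using the two sides of the hypothesis. The paper's version is slightly terser (it just asserts $l<k$ and $l>k-1$ without writing out the intermediate inequalities), but the argument is identical.
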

\begin{proof}
	Assume $(m,f)$ is realised as $K_l^{(r)}-H$, where $l \ge 0$ and $H$ is an ${\le}m$-edge $r$-graph, and some isolated vertices. Then from the upper bound on $f$, we have that $l < k$, and from the lower bound on $f$, we see that $l > k-1$. Thus, no such $l$ exists. 
\end{proof}

\begin{proof}[Proof of \Cref{thm:main}]
	Let $r\ge3$, $m \ge m_0$ and let $f_0 = \floor{\binom mr /2}$
	.\\
	Using \Cref{lem:avoidable}, we need to show that either $(m,f_0)$ or both $(m, f_0-(m+1))$ and $(m, f_0 + (m+1))$ are not realisable as the vertex disjoint union of a clique and an ${\le}m$-edge $r$-graph. To this end we will show that the condition of \Cref{mainclaim} is satisfied. \\
	Let $x$ be an integer such that $\binom xr \le \floor{\binom mr /2} < \binom{x+1}r$. By standard bounds we observe the following: 
	$$\frac12\left(\frac mr\right)^r \le \floor{\frac12 \binom mr }< \binom {x+1}r  <  \left(\frac {(x+1)e}r\right)^r ,$$
	and thus, 
	$$x+1 > \frac 1{2^{1/r}e} m.$$
	Thus, by choosing $m_0$ sufficiently large, we have for $m \ge m_0$ that $x-1 >  \frac m4$, since for $r \ge 3$ $\sqrt[r]{2}e \ge \sqrt[3]{2}e > 4$, and 
	 \begin{equation}\binom{x-1}{r-1} \ge \left(\frac{x-1}{r-1}\right)^{r-1} \ge \left(\frac{m }{4(r-1)}\right)^{r-1} >   2m+2. \tag{$*$}\end{equation}


	\textbf{Case 1:} $\binom xr + m < f_0$. Then by  \Cref{mainclaim}, $(m,f_0)$ cannot be realised by $K_k^{(r)} + H$, where $k \in \mathbb N$ and $H$ has at most $m$ edges. If $\ceil{\binom mr/2} < \binom{x+1}{r}$, then again by \Cref{mainclaim}, both $(m, f_0)$ and $(m, \binom mr - f_0)$ cannot be realised as the disjoint union of a clique and an ${\le}m$-edge $r$-graph, i.e. by \Cref{lem:avoidable}, $(m,f_0)$ is absolutely avoidable. \\
	Otherwise, we have $\ceil{\binom mr /2} = \binom {x+1}r = \floor{\binom mr/2}+1$.
	Let $f_- = \floor{\binom mr/2} - (m+1) $ and $f_+ = \ceil{\binom mr/2} + (m+1)$. We clearly have $f_- < \binom{x+1}r$ and $f_+ > \binom{x+1}r + m$, so it remains to show that $f_- > \binom{x}r + m$ and $f_+ < \binom{x+2}{r}$. Indeed, we have 
	\begin{align*} f_- - \binom{x}r  &= \binom {x+1}r - m -1 - \binom{x}{r}  \\
		&= \binom{x}{r-1} - (m+1) \\
		& \overset{(*)}{>} 2m+1 - m -1 = m,
	\end{align*} i.e. $f_- > \binom xr + m$, and also 
	$$f_+ = \binom {x+1}r + m+1 < \binom{x+1}{r} + 2m+1 \overset{(*)}{<}  \binom{x+1}{r}+\binom{x-1}{r-1} < \binom{x+2}{r},$$
	and thus, $f_+ < \binom{x+2}{r}$.
 \\
	
	\textbf{Case 2:} $\binom xr \le \floor{\binom mr/2} 
	\le \binom xr + m$.\\
	Let $f_- = \floor{\binom mr/2} - (m+1) $ and $f_+ = \ceil{\binom mr/2} + (m+1)$.
	 It remains to check that neither $(m,f_-)$ nor $(m, f_+)$ can be realised as the vertex disjoint union of a clique and an  ${\le}m$-edge $r$-graph. On the one hand we have $f_-  \le \binom xr + m-(m+1) < \binom xr$. Thus, in order to use \Cref{mainclaim}, it remains to verify that we also have $f_- > \binom{x-1}{r} + m$. Indeed, we have 
	\begin{align*} f_- - \binom{x-1}r  &\ge \binom xr - (m+1) - \left(\binom xr - \binom{x-1}{r-1}\right)  \\
		&= \binom{x-1}{r-1}  - (m+1) \\
		& \overset{(*)}{>} 2m+1 - m -1 = m.
	\end{align*}
	Thus, we have $f_- - \binom{x-1}r >m$ for $m \ge m_0$, i.e. we have 
	$\binom{x-1}r + m< f_- < \binom{x}{r}$, so by \Cref{mainclaim}, $(m,f_-)$ cannot be realised as the vertex disjoint union of a clique and an ${\le}m$-edge $r$-graph.
	
	On the other hand, we clearly have  $f_+ > \binom xr + m$, and also, $$f_+ \le \binom xr + m+1 + (m+1)  \overset{(*)}{<} \binom xr + \binom{x-1}{r-1}  < \binom xr + \binom{x}{r-1} =  \binom{x+1}{r},$$
	so by \Cref{mainclaim}, $(m,f_+)$ cannot be realised as $K_k^{(r)}+H$, where $k \in \mathbb N$ and $H$ is an ${\le}m$-edge $r$-graph.
	
	Thus,  by \Cref{lem:avoidable} the pair $(m, f_-)$ is absolutely avoidable. 
\end{proof}

\section{Density observations}\label{sec3}
Let $r \ge 3$, $m$, $f\le \binom mr$. Recall from the introduction that the density is defined as 
 $$\sigma_r(m,f) = \limsup\limits_{n \to \infty}\frac{|\{e: (n,e) \to (m,f)\}|}{\binom nr}.$$
 
By considering complements, it immediately follows that  $\sigma_r(m,f) = \sigma_r(m, \binom mr - f)$. 
Recall that for a family of $r$-graphs  $\mathcal G$, $\operatorname{ex}_r(n, \mathcal G)$ denotes the extremal number, i.e. the maximum number of edges an $r$-graph on $n$ vertices can have without
containing any member of $\mathcal G$. For an $r$-graph $H$, the \emph{Tur\'an-density} is defined 
as $\pi_r(H) = \lim\limits_{n \to \infty}\frac{\operatorname{ex}(n, \{H\})}{\binom nr}$.

Note that for $f = 0$,  $\sigma_r$ corresponds to the Tur\'an density, i.e. 
$ \sigma_r(m,0) = \sigma_r(m,\tbinom mr) = \pi_r(K_{m}^{(r)}) $,
 where the currently best known general bounds on the Tur\'an density are 
$$ 1- \left(\frac{r-1}{m-1}\right)^{r-1}\le \pi(K_{m}^{r}) \le 1 - \binom{m-1}{r-1}^{-1},$$
due to Sidorenko~\cite{S81} and de Caen~\cite{dC}. Also note that
$\sigma_r(r, 1) = \sigma_r(r,0) = 1$. Thus, the only non-trivial cases are $m > r$, which are dealt with in \Cref{prop:density_zero} and \Cref{prop:density_not_one}. 

Before we prove \Cref{prop:density_zero}, we show the following auxiliary lemma:

\begin{lemma}\label{lem:not_dense}
	Let $m, r, f \in \mathbb N$ with $m \ge r \ge 3$ and $0 \le f \le \binom mr$. 
	\begin{enumerate}
		\item[(a)] If $(m,f)$ cannot be realised as the disjoint union of a clique and an ${\le}m$-edge $r$-graph, then $\sigma_r(m,f) = 0$. In particular, if there is no $x\in [m]$, sucht that $0 \le f - \binom xr < m$, then $\sigma_r(m,f) = 0$.
		\item[(b)] If $(m,f)$ cannot be realised as the complement of an ${\le}m$-edge $r$-graph and some isolated vertices, then $\sigma_r(m,f) = 0$. In particular, if there is no $x\in [m]$, sucht that $0 \le \binom xr - f < m$, then $\sigma_r(m,f) = 0$.
		\item[(c)] 	If $\sigma_r(m,f) > 0$, then there exist $x, \bar x \in [m]$ such that $0 \le f - \binom{x}{r} < m$ and\\ \mbox{$0 \le (\binom mr - f)- \binom{\bar x}{r} < m$}.
	\end{enumerate}
\end{lemma}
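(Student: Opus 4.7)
The plan is to deduce all three parts from the structured realisations provided by \Cref{yes-clique-forest} and \Cref{yes-clique-removed}. For (a), I fix an arbitrary constant $c \in (0,1)$ and invoke \Cref{yes-clique-forest} with sparseness parameter $p = m$: for $n$ sufficiently large and each $e \in [c\binom{n}{r}]$, there is an $n$-vertex $r$-graph $G_e$ with $e$ edges that is the vertex-disjoint union of a clique $K_k^{(r)}$ and an $m$-sparse $r$-graph $H$ on $n - k$ vertices, where the proof of that lemma gives $k \le c' n$ for some constant $c' < 1$. Any $m$-subset $S \subseteq V(G_e)$ decomposes as $S = S_1 \cup S_2$ with $S_1$ in the clique and $S_2 \subseteq V(H)$, and the induced sub-$r$-graph on $S$ is the disjoint union of $K_{|S_1|}^{(r)}$ and $H[S_2]$; since for $n$ large we have $|V(H)| \ge m$, the set $S_2$ extends to an $m$-subset of $V(H)$, and $m$-sparseness forces $|E(H[S_2])| \le m$. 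Hence every induced $m$-vertex sub-hypergraph of $G_e$ is a disjoint union of a clique and an $\le m$-edge $r$-graph, and if $(m,f)$ cannot be so realised, $G_e \not\to_r (m,f)$ for every such $e$. This gives $\{e : (n,e) \to_r (m,f)\} \subseteq \{e : e > c\binom{n}{r}\}$, so $\sigma_r(m,f) \le 1 - c$; letting $c \uparrow 1$ yields $\sigma_r(m,f) = 0$. The ``in particular'' is immediate, since any realisation of $(m,f)$ as $K_x^{(r)} + H_0$ with $|E(H_0)| \le m$ forces some $x \in [m]$ with $f - \binom{x}{r} \in \{0, \ldots, m\}$.

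Part (b) proceeds by the entirely symmetric argument on the upper range $e \in [c\binom{n}{r}, \binom{n}{r}]$, invoking \Cref{yes-clique-removed} in place of \Cref{yes-clique-forest}. The realising $r$-graphs there have the form ``(complement of an $m$-sparse $r$-graph on some $k \le n$ vertices) together with $n - k$ isolated vertices'', and the same splitting argument forces every induced $m$-vertex sub-$r$-graph to be the disjoint union of (a clique on $|S_1|$ vertices minus an $\le m$-edge $r$-graph) with $m - |S_1|$ isolated vertices. The density bound and the corresponding ``in particular'' follow in the same way, with $f - \binom{x}{r}$ replaced by $\binom{x}{r} - f$.

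For (c) I would apply the contrapositive of the ``in particular'' in (a) twice: once to $f$ directly, producing $x \in [m]$ with $0 \le f - \binom{x}{r} < m$, and once to $\binom{m}{r} - f$, producing $\bar x \in [m]$ with $0 \le \binom{m}{r} - f - \binom{\bar x}{r} < m$; the latter application is legitimate because complementation gives $\sigma_r(m,f) = \sigma_r(m, \binom{m}{r} - f)$, as noted at the start of \Cref{sec3}. The main technical obstacle is the extension argument that bounds $|E(H[S_2])| \le m$ when $|S_2| < m$, which requires $|V(H)| \ge m$ and is secured by the bound $k \le c'n$ with $c' < 1$ coming from the choice of $c \in (0,1)$. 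A minor further delicacy is the strict-versus-non-strict inequality $f - \binom{x}{r} < m$ appearing in the ``in particular'', which can be handled by a short check on the boundary case $f - \binom{x}{r} = m$ (reparametrising via $x \mapsto x+1$) and does not affect the overall strategy.
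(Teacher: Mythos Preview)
Your proposal is correct and follows essentially the same approach as the paper: invoke \Cref{yes-clique-forest} (resp.\ \Cref{yes-clique-removed}) on $[c\binom{n}{r}]$ (resp.\ its complement), observe that every $m$-vertex induced subgraph inherits the clique-plus-$\le m$-edge structure, bound $\sigma_r$ by $1-c$ (resp.\ $c$), and let $c\to 1$ (resp.\ $c\to 0$); part~(c) is then the contrapositive of~(a) applied to both $f$ and $\binom{m}{r}-f$. Your write-up is in fact more careful than the paper's in two places---you make explicit the extension argument guaranteeing $|E(H[S_2])|\le m$ when $|S_2|<m$ (which requires $|V(H)|\ge m$, ensured by $k\le c'n$), and you flag the strict-versus-nonstrict issue in the ``in particular'' clause---neither of which the paper addresses.
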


\begin{proof}
	\begin{enumerate}
		\item[(a)] By \Cref{yes-clique-forest}, for any $0 < c' < 1$, $n$ sufficiently large, and $e \in \mathcal E_n := \left[c'\binom nr\right]$ there exists an  $r$-graph $G$ on $n$ vertices with $e$ edges which is the vertex disjoint union of a clique and an $m$-sparse $r$-graph. 
		
		Note that any induced subgraph on $m$ vertices of $G$ is the union of a clique and an $r$-graph with at most $m$ edges. Thus, by  definition of $\sigma_r$, if a pair $(m,f)$ cannot be realised by a clique and an ${\le}m$-edge $r$-graph, we have $$\sigma_r(m,f) = \limsup\limits_{n \to \infty}\frac{|\{e: (n,e) \to (m,f)\}|}{\binom nr} \le \limsup\limits_{n \to \infty} \frac{|\binom{[n]}{r}- \mathcal E_n|}{\binom nr} = \frac{(1-c')\binom{n}{r}}{\binom nr}=  1-c'.$$
		Letting $c'$ go to one proves the statement.
		\item[(b)] By \Cref{yes-clique-removed}, for any $0 < c' < 1$, $n$ sufficiently large, and $e \in \mathcal E_n := \left[\binom  nr \right] - \left[c'\binom nr\right]$ there exists an  $r$-graph $G$ on $n$ vertices with $e$ edges which is the complement of an $m$-sparse $r$-graph and some isolated vertices.
		
		Note that any induced subgraph on $m$ vertices of $G$ is the union of a clique with at most $m$ edges removed and an empty graph. Thus, by  definition of $\sigma_r$, if a pair $(m,f)$ cannot be realised as the complement of an ${\le}m$-edge $r$-graph and some isolated vertices, we have $$\sigma_r(m,f) = \limsup\limits_{n \to \infty}\frac{|\{e: (n,e) \to (m,f)\}|}{\binom nr} \le \limsup\limits_{n \to \infty} \frac{|\binom{[n]}{r}- \mathcal E_n|}{\binom nr} = \frac{c'\binom{n}{r}}{\binom nr}=  c'.$$
		The statement follows by letting $c'$ go to zero.
		
		\item[(c)] Now assume we have $\sigma_r(m,f) > 0$. Let $y$, $\bar y$ be the largest integers with $y \le f$, $\bar y \le \bar f$ which can be written as binomial coefficients $y = \binom xr$, $\bar y = \binom{\bar x}r$ for some $x, \bar x \in [m]$. Then applying part (a)  to $(m,f)$ and $(m,\bar f)$, we have $f - y <m$ and $\bar f -\bar y<m$.
	\end{enumerate}	
\end{proof}

\begin{proof}[Proof of \Cref{prop:density_zero}]

By \Cref{lem:not_dense}(a) and (b), it remains to show that for fixed $m$ at most $m^{\frac{r}{r-1}}$ of all possible $\binom mr$ pairs $(m,f)$ can be realised as both the vertex disjoint union of a clique and an ${\le}m$-edge $r$-graph and the complement of an ${\le}m$-edge $r$ graph and some isolated vertices. Let $m$ be fixed and let $f \le \binom mr$ and write $f$ uniquely as $f = \binom lr + l'$, where $0 \le l' < \binom l{r-1}$.
By \Cref{mainclaim},  if  $l' > m$, then $(m,f)$ cannot be realised as the vertex disjoint union of  $K_l^{(r)}$ and an ${\le}m$-edge $r$-graph. 
 By \Cref{mainclaim2}, if  $l' < \binom {l}{r-1} - m$, i.e. $f < \binom{l+1}{r}-m$,  then $(m,f)$ cannot be realised as the complement of an ${\le}m$-edge $r$-graph and some isolated vertices. 
In particular, if $l' > m$ or $l' < \binom{l}{r-1} -m$ and $l \ne 0$, by \Cref{lem:not_dense} we have $\sigma_r(m,f) = 0$. Thus, if for $l$ we have $ m < \binom{l}{r-1} - m$, then any pair $(m,f)$ with $f \ge \binom{l}{r}$ and $f \ne \binom xr$ for some $x \in [m]$,  satisfies $\sigma_r(m,f) = 0$. 
In particular, this means that we have $\sigma_r(m,f) = 0$ if $l \in \Omega(m^{\frac1{r-1}})$ and $l' > 0$. 
 
 Now assume we have $l \in o(m^{\frac{1}{r-1}})$, 
 $f=\binom lr + l'$,  $0 \le l < m$, $0 \le l' < \binom l{r-1}$. Then, for each of the possible choices of $l$, at most $m$ choices of $l'$ satisfy $l' \le m$, while for all others we obtain $l' > m$. Thus, again by \Cref{lem:not_dense}(a), these pairs cannot be realised as the vertex disjoint union of a clique and an ${\le}m$-edge $r$-graph.
 
 Thus, since $r\ge 3$, all but at most $m\cdot m^{\frac1{r-1}} + m  \in O(m^{\frac r{r-1}})$  of all possible $\binom mr$ pairs cannot be realised as the vertex disjoint union of a clique and an $\{le\}m$-edge $r$-graph, so at least $\binom mr - m^2$ of  all pairs $(m,f)$ have density $\sigma_r(m,f) = 0$. Note that for $r \ge 3$, we have $m^{\frac{r}{r-1}} \in o(\binom mr)$.
\end{proof}

For the proof of \Cref{prop:density_not_one} we will use the following extension of Tur\'an's theorem to hypergraphs by Mubayi~\cite{M}. For fixed $l, r  \ge 2$ let $\mathcal F_l^{(r)}$ be the family of $r$-graphs with at most $\binom l2$ edges, that contain a \emph{core} $S$ of $l$ vertices, such that every pair of vertices in $S$ is contained in an edge.

\begin{theorem}\label{thm:ex}\cite{M}[Mubayi '05]
	Let $r, l, n \ge 2$. Then 
	$$\operatorname{ex}(n, \mathcal F_{l+1}^{(r)}) =  t_r(n, l) 
	$$
	 and the unique $r$-graph on $n$ vertices containing no copy of any member of $\mathcal F_{l+1}^{(r)}$ for which equality holds is $T_r(n,l)$, the complete  balanced $l$-partite $r$-graph on $n$ vertices. 
\end{theorem}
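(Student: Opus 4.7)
The plan is to prove both the extremal identity and the uniqueness by passing through the \emph{cover graph} (also called the $2$-shadow) $L(G)$ of an $r$-graph $G$, defined as the graph on $V(G)$ with $\{u,v\}\in E(L(G))$ iff some hyperedge $e\in E(G)$ contains both $u$ and $v$. For the lower bound $\operatorname{ex}(n,\mathcal{F}_{l+1}^{(r)})\ge t_r(n,l)$, note that in $T_r(n,l)$ every hyperedge has its $r$ vertices in $r$ distinct parts, so any $(l+1)$-subset of vertices contains two vertices lying in the same part; no edge of $T_r(n,l)$ covers such a pair, hence no $(l+1)$-vertex core with all pairs covered can exist, and $T_r(n,l)$ is $\mathcal{F}_{l+1}^{(r)}$-free.

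For the upper bound, let $G$ be $\mathcal{F}_{l+1}^{(r)}$-free on $n$ vertices. First I would show that $L(G)$ is $K_{l+1}$-free: if some $S\subseteq V(G)$ with $|S|=l+1$ induced a clique in $L(G)$, then for each of the $\binom{l+1}{2}$ pairs in $S$ we could select one hyperedge of $G$ covering that pair, yielding a sub-$r$-graph of $G$ with at most $\binom{l+1}{2}$ edges and core $S$ in which every pair is covered, i.e.\ a member of $\mathcal{F}_{l+1}^{(r)}$ appearing in $G$, a contradiction.

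Next, observe that every edge of $G$ is a set of $r$ pairwise $L(G)$-adjacent vertices and is therefore an $r$-clique of $L(G)$, so $|E(G)|\le k_r(L(G))$, where $k_r(H)$ denotes the number of $r$-cliques of a graph $H$. I then invoke Zykov's classical extension of Tur\'an's theorem: among all $K_{l+1}$-free graphs on $n$ vertices, the Tur\'an graph $T(n,l)$ uniquely maximises the number of $r$-cliques. Since an $r$-clique of $T(n,l)$ is exactly a transversal of some $r$-subset of its $l$ parts, $k_r(T(n,l))=\sum_{S\in\binom{[l]}{r}}\prod_{i\in S} n_i=t_r(n,l)$. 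Chaining the two inequalities gives $|E(G)|\le k_r(L(G))\le k_r(T(n,l))=t_r(n,l)$.

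For uniqueness, the equality $|E(G)|=t_r(n,l)$ forces equality throughout the chain. The uniqueness clause of Zykov's theorem then gives $L(G)=T(n,l)$ (after a relabelling of vertices), and equality $|E(G)|=k_r(L(G))$ forces every $r$-clique of $T(n,l)$ to actually be an edge of $G$, so $G=T_r(n,l)$. The main obstacle is supplying the uniqueness clause of Zykov's theorem: the extremal \emph{value} of $k_r$ follows almost immediately from Zykov symmetrisation (replace a non-edge $uv$ by duplicating the neighbourhood of whichever of $u,v$ lies in more $r$-cliques), but characterising all extremal graphs requires a more careful tracking of the symmetrisation — checking that a non-edge produces a \emph{strict} gain in $k_r$ unless the graph is already complete multipartite — followed by a short convexity argument showing $\sum_{S\in\binom{[l]}{r}}\prod_{i\in S}n_i$ is maximised over all $l$-partitions of $[n]$ only by the balanced one.
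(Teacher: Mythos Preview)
The paper does not give its own proof of this statement; it is quoted from Mubayi~\cite{M} and used as a black box in the proof of Proposition~\ref{prop:density_not_one}. So there is no ``paper's proof'' to compare against.

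Your argument is correct and is in fact essentially the proof Mubayi gives in~\cite{M}. The key reduction --- that $G$ is $\mathcal F_{l+1}^{(r)}$-free if and only if its $2$-shadow $L(G)$ is $K_{l+1}$-free --- is exactly what collapses the problem to the clique-counting extension of Tur\'an's theorem, and the chain $|E(G)|\le k_r(L(G))\le k_r(T(n,l))=t_r(n,l)$ together with its equality analysis yields both the extremal value and the uniqueness of $T_r(n,l)$. Your caveat about the uniqueness clause of the Zykov-type result is well placed: symmetrisation gives the upper bound on $k_r$ immediately, but one must argue separately (by tracking when a symmetrisation step is strict, or via a short stability argument) that equality forces the shadow graph to be complete balanced $l$-partite.
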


\begin{proof}[Proof of \Cref{prop:density_not_one}]
Now let $m > r \ge 2$, $0  \le f \le \binom mr$.\\
Let $l \in \mathbb N$, such that $t_r(m,l) < \frac12 \binom mr$.
Note that for $r \ge 3$, such an $l$ always exists, since we have $t_r(m,r) < \frac12 \binom mr$, so we can always choose $l=r$.\\ 
Thus, in particular, we are in one of two cases: Either we have $f \ge \frac12\binom mr > t_r(m,l)$, or we have $f \le \frac 12\binom mr$, i.e. $f-\binom mr \ge \frac12 \binom mr > t_r(m,l)$. \\
\textbf{Case 1:}  $f > t_r(m,l)$. Then by \Cref{thm:ex}, any $r$-graph that realises the pair $(m,f)$ contains a member of $\mathcal F_{l+1}^{(r)}$. Thus, in order to have $(n,e) \to_r (m,f)$, we must have $e > t_r(n,l)$, since for $e \le t_r(n,l)$ we could simply take a subgraph of $T_r(n,l)$ with $e$ edges which does not contain any member of $\mathcal F_{l+1}^{(r)}$, and hence no graph that realises $(m,f)$. In particular, this implies that $$\sigma_r(m,f) \le \lim\limits_{n \to \infty}\frac{\binom nr - t_r(n,l)}{\binom nr} <1 .$$\\
\textbf{Case 2:}  $\binom mr - f > t_r(m,l)$. Then by \Cref{thm:ex} any graph that realises $(m, \binom mr -f)$ contains a member of $K_{l+1}^{(r)}$. Then any graph $G$ with $G \to_r (m, \binom mr -f)$ must contain a member of $\mathcal F_{l+1}^{(r)}$, i.e. $|E(G)| > t_r(n, l)$. Thus, for each $e\le t_r(n, l)$, $(n, e) \not\to_r (m, \binom mr - f)$, and thus, by considering the complement, for each $e \ge \binom nr - t_r(n,l)$, $(n,e) \not\to_r (m,f)$. In particular, we have 
$$ \sigma_r(m,f) \le \lim_{n \to \infty} \frac{\binom nr - t_r(n,l)}{\binom nr}<1.$$
\\
Thus, in either case we have  
 $$\sigma_r(m, f) \le 1- \limsup\limits_{n \to \infty}\frac{t_r(n,l)}{\binom nr} = 1- \frac{(l)_r}{l^r} < 1.$$
 This proves part (a).
 
 To obtain part (b), assume that for some $l \in \mathbb N$ we have $t_r(m,l) < f < \binom mr - t_r(m,l)$. Then by Cases 1 and 2, we see that  $(n,e) \to_r (m,f)$ requires  $t_r(n,l) < e <  \binom nr - t_r(n,l)$. Thus, we obtain that 
$$\sigma_r(m,f) \le 1 - 2\limsup_{n \to \infty}\frac{t_r(n,l)}{\binom nr} = 1 - 2\frac{(l)_r}{l^r},$$
which completes the proof. 
\end{proof}
 
 \begin{corollary}\label{cor:small_r}
 	For $r=3$, $m> 3$, $0<f< \binom m3$, we have the following upper bounds on $\sigma_3(m,f)$:
 	\begin{enumerate}
 		\item $\sigma_3(m,f) \le \frac79$.
 		\item If $t_3(m,3) < f < \binom mr - t_3(m,3)$, then $\sigma_3(m,f) \le \frac59$.
 		\item If $m \ge 12$, then $\sigma_3(m,f) \le \frac58$.
 		\item If $m\ge 73$, then $\sigma_3(m,f) \le \frac{13}{25}$.
 	\end{enumerate}
 For $r=4$, $m > 4$, $0 < f < \binom m4$, we have the following upper bounds on $\sigma_4$:
 	\begin{enumerate}
 		\item $\sigma_4(m,f) \le \frac{29}{32}$, 
 		\item There is $m_0$, such that for all $m \ge m_0$ we have $\sigma_4(m,f) \le \frac{131}{243} \approx 0.54$. 
 	\end{enumerate}
 \end{corollary}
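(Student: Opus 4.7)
The proof is a direct application of Proposition~\ref{prop:density_not_one} in each item with a well-chosen integer $l$; the only things to check are the admissibility condition $l\le l_{m,r}$ and the arithmetic value of $\tfrac{(l)_r}{l^r}$.

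For $r=3$, I would use $l\in\{3,4,5\}$. Since $\tfrac{(3)_3}{3^3}=\tfrac{6}{27}=\tfrac{2}{9}<\tfrac12$, any $m>3$ satisfies $l_{m,3}\ge 3$, so applying part~(a) and part~(b) of Proposition~\ref{prop:density_not_one} with $l=3$ gives items~1 and~2 (bounds $1-\tfrac29=\tfrac79$ and $1-\tfrac49=\tfrac59$). For items~3 and~4 I would invoke the explicit table for $l_{m,3}$ stated in the introduction, which gives $l_{m,3}\ge 4$ for $m\ge 12$ and $l_{m,3}\ge 5$ for $m\ge 73$; applying part~(a) with these values yields $1-\tfrac{(4)_3}{4^3}=1-\tfrac{24}{64}=\tfrac58$ and $1-\tfrac{(5)_3}{5^3}=1-\tfrac{60}{125}=\tfrac{13}{25}$ respectively.

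For $r=4$, item~1 again comes from part~(a) with $l=r=4$: $\tfrac{(4)_4}{4^4}=\tfrac{24}{256}=\tfrac{3}{32}<\tfrac12$, so $l_{m,4}\ge 4$ for every $m>4$ and the resulting bound is $1-\tfrac{3}{32}=\tfrac{29}{32}$. Item~2 requires identifying the largest $l$ for which $\tfrac{(l)_4}{l^4}<\tfrac12$. A direct computation of $\tfrac{(l)_4}{l^4}$ at $l=4,5,\ldots,10$ produces $\tfrac{(9)_4}{9^4}=\tfrac{9\cdot 8\cdot 7\cdot 6}{6561}=\tfrac{3024}{6561}=\tfrac{112}{243}<\tfrac12$, while $\tfrac{(10)_4}{10^4}=\tfrac{5040}{10000}>\tfrac12$, so $l=9$ is the right choice. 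Since $t_4(m,9)=\tfrac{112}{243}\binom m4+o(m^4)$, for every $m$ beyond some threshold $m_0$ one has $t_4(m,9)<\tfrac12\binom m4$ and hence $l_{m,4}\ge 9$; applying part~(a) with $l=9$ then gives $\sigma_4(m,f)\le 1-\tfrac{112}{243}=\tfrac{131}{243}$, as claimed. The whole proof is essentially mechanical; the only mildly nontrivial step is the identification of $l=9$ for the $r=4$ asymptotic bound.
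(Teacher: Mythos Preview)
Your proposal is correct and follows essentially the same route as the paper's proof: both apply Proposition~\ref{prop:density_not_one} with the choices $l=3,4,5$ for $r=3$ and $l=4,9$ for $r=4$, invoking the facts $l_{m,r}\ge r$ and the explicit table for $l_{m,3}$ from the introduction, and then substitute the computed values of $\tfrac{(l)_r}{l^r}$. The paper additionally remarks that $\tfrac{(6)_3}{6^3}=\tfrac{5}{9}>\tfrac12$, confirming (as you also note for $r=4$ via $l=10$) that these are the best bounds this method can give.
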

\begin{proof}
	We start with $r=3$. In order to obtain our bounds, we can compute the fraction $\frac{(l)_3}{l^3}$ for different $l \ge 3$. We have that  
	$$\frac{(3)_3}{3^3} = \frac29, \qquad \frac{(4)_3}{4^3} = \frac{3}{8}, \qquad \frac{(5)_3}{5^3} = \frac{12}{25}, \qquad \frac{(6)_3}{6^3} = \frac59.$$
	Note that $\frac{(6)_3}{6^3} > \frac12$, so for $r=3$, the best possible upper bound one can achieve for any pair using \Cref{prop:density_not_one} will use $l=5$. \\
	Now $(1)$ and $(2)$  immediately follow from \Cref{prop:density_not_one}, by setting $l = r = 3$ and observing that for $r\ge 3$, we always have $t_r(m,r) < \frac12 \binom mr$. Then 
$$\sigma_3(m,f) \le  1 - \lim_{n \to \infty} \frac{t_3(n, 3)}{\binom n3} = 1 - \lim_{n \to \infty }\left(\frac{(l)_r}{l^r} \frac{\binom nr}{\binom nr}+ \frac{o(n^r)}{\binom nr}\right)  = 1 - \frac{(3)_3}{3^3} = \frac79,$$
and for $(2)$, if $t_3(m,3) < f < \binom mr - t_3(m,3)$, 
$$\sigma_3(m,f) \le 1 - 2 \lim_{n \to \infty} \frac{t_3(n, 3)}{\binom n3} = \frac59.$$
Now for $(3)$ note that for $m \ge 12$, we have $l_{m,3} = 4$, and thus, by \Cref{prop:density_not_one}(a), for all pairs $(m,f)$ with $m\ge 12$ we have 
$$\sigma_3(m,f) \le 1 - \left(\frac{(4)_3}{4^3}\right) = 1-\frac{6}{16} =\frac58.$$
For $(4)$ note that for $m \ge 73$, we have $l_{m,3}=5 $,  so using \Cref{prop:density_not_one}(a), for all pairs $(m,f)$ with $m\ge 73$ we have $$\sigma_3(m,f) \le 1 - \left(\frac{(5)_3}{5^3}\right) = 1-\frac{12}{25}= \frac{13}{25}.$$

For the case $r=4$, we obtain the first part by computing $\frac{(4)_4}{4^4} = \frac{3}{32}$. The second part is obtained by computing $\frac{(9)_4}{9^4} = \frac{112}{243}$ and noting that $l_{m,4} = 9$ for $m \ge m_0$.

\end{proof}

\section{Conclusion}

We have proven that for $m > r \ge 3$ and $f$ with $0 \le f \le \binom mr$, we always have $\sigma_r(m,f) < 1$. We have also shown that for fixed $r$, most pairs $(m,f)$ satisfy $\sigma_r(m,f) = 0$. On the other hand, for $r \ge 3$ there is no pair $(m,f)$  with $0 < f < \binom mr$ for which we can show that $\sigma_r(m,f) > 0$. This inspires the following question:\\

 \textbf{Open problem:} For $m > r \ge 3$, are there any $f$ with $0 <  f < \binom mr$ such that $\sigma_r(m,f) > 0$?\\

%
%

%
%


Next, we will use \Cref{lem:not_dense} to identify candidate pairs $(m,f)$ for $r=3, m \le 15$ which might satisfy $\sigma_3(m,f) >0$:\\ 


\begin{lemma}\label{lem:classification}
Let $r=3$, $4 \le m \le 15$ and $0 < f < \binom mr$. If $(m,f) \neq (6,10)$, then $\sigma_3(m,f) = 0 $.
\end{lemma}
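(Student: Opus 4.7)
The plan is to apply \Cref{lem:not_dense} systematically to each pair $(m,f)$ with $4 \le m \le 15$ and $0 < f < \binom{m}{3}$, thereby reducing the problem to a small finite list of candidate pairs which survive the lemma, and then to dispatch the remaining candidates by direct constructions.

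Parts~(a) and~(b) of \Cref{lem:not_dense}, together with the symmetry $\sigma_3(m,f) = \sigma_3(m,\binom{m}{3}-f)$, force any pair with $\sigma_3(m,f) > 0$ to have both $f$ and $\binom{m}{3}-f$ lying within $m-1$ above some $\binom{x}{3}$ with $x \in [m]$ and within $m-1$ below some $\binom{y}{3}$ with $y \in [m]$. The relevant binomial coefficients for $x \le 15$ are $0, 1, 4, 10, 20, 35, 56, 84, 120, 165, 220, 286, 364, 455$, and a direct tabulation shows that for $m \in \{8, 10, 11, 12, 13, 14, 15\}$ no $f \in (0, \binom{m}{3})$ satisfies all four constraints, so \Cref{lem:not_dense} already yields $\sigma_3(m,f) = 0$ for every such pair. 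For the remaining values of $m$, the surviving candidates (up to the symmetry $f \leftrightarrow \binom{m}{3}-f$) are exactly $(4,1), (4,2)$, $(5,2), (5,3), (5,4)$, $(6,5), (6,10)$, $(7,15)$, and $(9,28)$.

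For each surviving candidate other than $(6,10)$, I would exhibit, for $n$ sufficiently large, a family of $n$-vertex 3-graphs containing no induced $(m,f)$-copy whose edge counts cover all but $o(\binom{n}{3})$ values in $[0, \binom{n}{3}]$. Useful building blocks are cliques $K_k^{(3)}$, complete bipartite 3-graphs $K_{a,b}^{(3)}$ (whose induced 4-subsets have only $0$, $3$, or $4$ edges), complete 3-partite 3-graphs (whose induced 4-subsets have only $0$, $2$, or $4$ edges), and partial Steiner triple systems (whose induced 4-subsets have only $0$ or $1$ edges), combined vertex-disjointly together with isolated vertices. For each candidate pair one chooses the combination so that a short case analysis on the distribution of the $m$ chosen vertices amongst the blocks rules out $f$ as a possible induced edge count; the symmetric candidate $(m,\binom{m}{3}-f)$ is then handled by taking complements.

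The main obstacle is the coverage calculation: varying the clique size $k$, the bipartite part sizes, and --- crucially --- the number of edges inside the partial Steiner triple system block (which contributes on the finer scale of $\Theta(n^2)$ and thereby fills the gaps left by the more rigid clique and partite blocks), one must show that the set of achievable edge counts is dense in $[0, \binom{n}{3}]$. The pair $(6,10)$ remains the only genuine exception because $10 = \binom{5}{3} = \binom{6}{3}/2$ is itself a binomial coefficient lying exactly at half of $\binom{6}{3}$, so \Cref{lem:not_dense} is tight for both $f$ and $\binom{6}{3}-f$ simultaneously, and no combination of the blocks above succeeds in avoiding both induced pairs at once.
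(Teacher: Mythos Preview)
Your proposal is a plan rather than a proof: the ad hoc constructions for the surviving candidates are only sketched, and you explicitly flag the coverage calculation as ``the main obstacle'' without resolving it. As written, nothing after the tabulation step is actually established.

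More importantly, you are using only the weak ``in particular'' clauses of \Cref{lem:not_dense}(a),(b), and this is why you are left with so many survivors. The paper instead exploits the \emph{full} hypothesis of \Cref{lem:not_dense}(a): if $(m,f)$ is realised as $K_l^{(3)}$ together with an ${\le}m$-edge $3$-graph $H$, then $H$ sits on only $m-l$ vertices, so in fact $|E(H)|\le\min\bigl(m,\binom{m-l}{3}\bigr)$. For $l\in\{m-1,m-2\}$ this forces $|E(H)|=0$, and for $l=m-3$ it forces $|E(H)|\le 1$. This single observation kills almost all of your survivors without any new constructions: for instance $(9,28)$ dies because $28=\binom{6}{3}+8$ but only $3$ vertices remain outside $K_6^{(3)}$; $(7,15)$ dies because $15-\binom{5}{3}=5$ cannot fit on $2$ vertices; and $(5,4),(5,3),(5,2),(6,5)$ die because their complements $(5,6),(5,7),(5,8),(6,15)$ cannot be written as $\binom{l}{3}+j$ with $j\le\binom{5-l}{3}$ (resp.\ $\binom{6-l}{3}$) for any $l$. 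The paper then finishes the few remaining cases with \Cref{prop:density_zero}, so no bespoke blow-up or Steiner-system constructions are ever needed.

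Your use of part~(b) is a reasonable idea and in fact disposes of pairs like $(10,84)$ and $(13,165)$ more directly than the paper's appeal to \Cref{prop:density_zero}; but it does not compensate for missing the vertex constraint above. Incidentally, your claim that the complete $3$-partite $3$-graph has induced $4$-sets with $4$ edges is wrong --- a $4$-set split $2{+}1{+}1$ spans exactly two transversal triples, and all other splits span none --- so even the building blocks for your proposed constructions need more care.
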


\begin{proof}
Let $(m,f)$ be a pair with $4 \le m \le 15$, $0 < f < \binom mr$ with $\sigma_3(m,f) > 0$. 
One can verify that for $4 \le  m \le 15$, we have $2\binom{m-3}3 +4\le \binom m3$, i.e. $\binom {m-3}{3}+2 \le \binom m3 - \binom{m-3}3 -2$. Thus, for any $f \le \binom m3$, we either have $f \ge \binom{m-3}{3} +2$ or $\binom m3 - f \ge \binom {m-3}3 + 2$, assume w.l.o.g. that we have $f \ge \binom{m-3}{3} +2$.
Let $x$ be the unique value with $\binom x3 \le f < \binom{x+1}3$ and write $f = \binom x3 + x'$, $0 \le x' < \binom{x}{2}$. Then by assumption we have $x \in \{m-3, m-2, m-1\}$. 

By \Cref{lem:not_dense}(a) we know that if we cannot realise the pair $(m,f)$ as the vertex disjoint union of a clique and an ${\le}m$-edge $3$-graph, then we have $\sigma_3(m,f) = 0$. Thus, we can assume that the pair $(m,f)$ can be realised as the vertex disjoint union of a clique and an ${\le}m$-edge $3$-graph. By \Cref{mainclaim}, it follows that $(m,f)$ can be realised as the disjoint union of $K_x^{(3)}$ and an ${\le}m$-edge $3$-graph.

Assume we have $x=m-3$. Since by assumption $f \ge  \binom {x}3 +2$, clearly the pair $(m,f)$ cannot be realised as the vertex disjoint union of $K_x^{(3)}$ and a graph on at most $1$ edge. Since $m-x = 3$, it also cannot be realised as the vertex disjoint union of $K_{x}^{(3)}$ and a graph with $2$ edges, a contradiction. Thus,  for $x=m-3$ we have $\sigma_3(m,f) = 0$.

So assume $x\in \{m-2, m-1\}$. Note that since $m- x \le 2$, the pair $(m,f)$ cannot be realised as the vertex disjoint union of a clique on $x$ vertices and a graph with at least $1$ edge. Thus, the only pairs $(m,f)$ which might satisfy $\sigma_3(m,f) > 0$ have $x'=0$, i.e. $f \in \{\binom{m-2}3, \binom{m-1}3\}$. Then $(m,f) \in \mathcal A_1 \cup \mathcal A_2 \cup \{(6,10), (10, 84), (13, 165), (15,169), (15,91)\}$ with 
\begin{align*}\mathcal A_1 &= \{(5,4), (7,20), (8,35), (9,56), (11, 120),  (12, 165), (13,220), (14, 286)\}\\
	\mathcal A_2 &= \{(5,1), (6,4), (7,10), (8,20), (9,35), (10,56), (11,84),  (12, 120), (14, 220) \}.\end{align*}
Now let $(m,f) \in  \mathcal A_1 \cup \mathcal A_2$. let $\bar f = \binom m3 - f$ and let $y \in [m]$ such that $\binom y3 \le \bar f < \binom{y+1}3$, i.e. $\bar f = \binom y3 + y'$ for some $y' \le \binom{y}{2}$. 
 Then it is easy to verify that we are in one of three cases: 
 \begin{itemize}
 	\item $y\in \{m-1, m-2\}$ and $y' > 0$, 
 	\item $y = m-3$ and $y' >1$,
 	\item $y \le m-4$ and $y' > m$. 
 \end{itemize}
In each case, by \Cref{mainclaim} $(m,\bar f)$ canot be realised as the disjoint union of a clique and an ${\le}m$-edge $3$-graph, and thus, by \Cref{lem:not_dense}(a), $\sigma_3(m,f) = \sigma_3(m, \bar f) = 0$. 

The pair $(6,10)$ is self-complementary with $10 = \binom53 = \binom 63/2$. 

For the pair $(10, 84)$ we have $\binom{10}3 - 84 = 36 = \binom 73 +1 = \binom {10}3 - \binom 93$. Note that for $m = 10$, we have $2m < \binom72$, i.e. by \Cref{prop:density_zero}, $\sigma_3(10,36) = 0$.

For the pair $(13, 165)$ we have $\binom{13}3 -165 = 121= \binom{10}3 +1 = \binom {13}3 - \binom{11}{3}.$ Note that for $m=13$, we have $2m < \binom{10}2$, i.e. by \Cref{prop:density_zero}, $\sigma_3(13,121) = 0$.

For the pair $(15, 91)$ we have $91 = \binom{15}3 - \binom{14}{3} =\binom{9}{3} +7$, and for the pair $(15,169 )$ we have $169 = \binom{15}3-\binom{13}3 = \binom{11}3 +4$. Note that for $m= 15$, we have $2m < \binom{9}{2} < \binom{11}{2}$, i.e. by \Cref{prop:density_zero}, $\sigma_3(15,91) = \sigma_3(15, 169) = 0$.
\end{proof}

\Cref{lem:classification} implies that for $r=3$, the smallest value of $m$, for which the first open problem has no answer is $m=6$. In this case, $f=10$ is the only possible value for which we might have 
$\sigma_3(6, f)>0.$ This leads to the following sub-problem of the first open problem: What is $\sigma_3(6,10)$?\\

From \Cref{cor:small_r} we obtain that $\sigma_3(6,10) \le \frac 59$, but we can do slightly better, as can be seen by considering the following construction:

Let $G_1 = K_3^{(3)}$, i.e. the 3-graph on 3 vertices with one edge. Assume $G_{k-1}$ has been constructed.
We obtain $G_k = (V_k, E_k)$ by taking 3 copies of $G_{k-1}$ and adding all edges using exactly one vertex from each copy of $G_{k-1}$, i.e. inserting the edges of a complete $3$-partite graph. Thus, for $G_k$ we have $|V_k| = 3|V_{k-1}| = 3^k$ and $|E_k| = |V_{k-1}|^3 + 3|E_{k-1}|$. 
One can show that $ \frac{|E_k|}{\binom{|V_k|}{3}}  = \frac{1}{4} - o(1),$  see for example B\'ar\'any and F\"uredi~\cite{BF}.
%

Thus, for sufficiently large $n = 3^k$, this gives a construction with $(\frac14-o(1))\binom n3$ edges.

Next we show that every $6$-set in $G_n$ spans at most $8$ edges. 
Let $X\subseteq V_n$ be a set of $6$ vertices in $G_n$.

Assume $X$ contains vertices from $3$ distinct copies of $G_{n-1}$. If $X$ contains $2$ vertices from each copy of $G_{n-1}$, then $X$ induces only the transversal edges, i.e. exactly $8$. If $X$
contains $3$, $2$, and $1$ vertices in the different parts, then $X$ induces $6$ transversal edges and at most $1$ additional edge inside one of the $G_{n-1}$'s. If $X$ contains $4$, $1$ and $1$ vertices in the different copies of $G_{n-1}$, then $X$ induces $4$ transversal edges and at most $\binom 43 = 4$ additional edges inside one of the $G_{n-1}$'s. 

If $X$ contains vertices from at most $2$ distinct copies of $G_{n-1}$, then $X$ induces no transversal edges, i.e. all edges induced by $X$ are contained within a copy of $G_{n-1}$.  Thus, we can iterate our argument, and obtain that any set of $6$ vertices induces at most $8$ edges. 

In particular, that shows that any subgraph of $G_n$ also does not induce $(6,10)$. Looking at the complement of $G_n$, we obtain that any $6$ vertices induce at least $\binom 63 - 8 = 12$ edges, and clearly so does any supergraph of the complement of $G_n$. \\
Thus, for $n$ sufficiently large and any $e \le  (\frac14-o(1))\binom n3$ or $e \ge (\frac34-o(1))\binom n3$ there exists a $3$-graph on $n$ vertices with $e$ edges, which does not arrow $(6,10)$. Thus, $\sigma_3(6,10) \le \frac 12$.\\

Note that this construction is not the only one that shows $\sigma_3(6,10) \le \frac12$: Blowing up a $C_5^{(3)}$ in the same manner as $K_3^{(3)}$ above is also induced $(6,10)$-free and achieves the same bound.\\

Very recently it was shown by Axenovich, Balogh, Clemen and the author~\cite{ABCW} that indeed we have $\sigma_3(6,10) > 0$. It would be interesting to further investigate this problem, as currently there is no other known pair $(m,f)$ for $r\ge 3$ which satisfies $\sigma_r(m,f) > 0$.\\

\textbf{Acknowledgements:} The author would like to thank Maria Axenovich for helpful discussions and valuable feedback on the manuscript, as well as Felix Christian Clemen for his comments on the manuscript and his suggestion to extend \Cref{yes-clique-forest}.

\begin{bibdiv} 
	\begin{biblist} 
		\bib{AK}{article}{
			title={Induced subgraphs with distinct sizes},
			author={Alon, Noga},
			author={Kostochka, Alexandr},
			journal={Random Structures \& Algorithms},
			volume={34},
			number={1},
			pages={45--53},
			year={2009},
			publisher={Wiley Online Library}
		}

		\bib{ABKS}{article}{
			title={Sizes of induced subgraphs of Ramsey graphs},
			author={Alon, Noga},
			author={Balogh, J{\'o}zsef},
			author={Kostochka, Alexandr},
			author={Samotij, Wojciech},
			journal={Combinatorics, Probability \& Computing},
			volume={18},
			number={4},
			pages={459},
			year={2009}
		}
		
		\bib{AKS}{article}{
			title={Induced subgraphs of prescribed size},
			author={Alon, Noga},
			author={Krivelevich, Michael},
			author={Sudakov, Benny},
			journal={Journal of Graph Theory},
			volume={43},
			number={4},
			pages={239--251},
			year={2003},
			publisher={Wiley Online Library}
		}
		
		\bib{AB}{article}{
			title={Graphs having small number of sizes on induced k-subgraphs},
			author={Axenovich, Maria},
			author={Balogh, J{\'o}zsef},
			journal={SIAM Journal on Discrete Mathematics},
			volume={21},
			number={1},
			pages={264--272},
			year={2007},
			publisher={SIAM}
		}
	
		\bib{ABCW}{article}{
			title={Order-size pairs of positive density in hypergraphs},
			author={Axenovich, Maria},
			author={Balogh, J{\'o}zsef},
			author={Clemen, Felix Christian},
			author={Weber, Lea},
			journal={in preparation},
			year={2022+}
		}
		
		\bib{AW}{article}{
			title={Absolutely avoidable order-size pairs for induced subgraphs},
			author={Axenovich, Maria},
			author={Weber, Lea},
			journal={arXiv preprint arXiv:2106.14908},
			year={2021}
		}
	
	\bib{BF}{article}{
		title={Almost similar configurations},
		author={B{\'a}r{\'a}ny, Imre},
		author={F{\"u}redi, Zolt{\'a}n},
		journal={arXiv preprint arXiv:1805.02072},
		year={2018}
	}

		\bib{B}{book}{
		title={Extremal graph theory},
		author={Bollob{\'a}s, B{\'e}la},
		year={2004},
		publisher={Courier Corporation}
	}

\bib{BS}{article}{
	title={Induced subgraphs of Ramsey graphs with many distinct degrees},
	author={Bukh, Boris},
	author={Sudakov, Benny},
	journal={Journal of Combinatorial Theory, Series B},
	volume={97},
	number={4},
	pages={612--619},
	year={2007},
	publisher={Elsevier}
}

\bib{CYZ}{article}{
	title={The feasibility problem for line graphs},
	author={Caro, Yair}, 
	author={Lauri, Josef}, 
	author={Zarb, Christina},
	journal={arXiv preprint arXiv:2107.13806},
	year={2021}
}

\bib{dC}{article}{
	AUTHOR = {de Caen, D.},
	TITLE = {Extension of a theorem of {M}oon and {M}oser on complete
		subgraphs},
	JOURNAL = {Ars Combin.},
	FJOURNAL = {Ars Combinatoria},
	VOLUME = {16},
	YEAR = {1983},
	PAGES = {5--10},
	ISSN = {0381-7032},
	MRCLASS = {05C30},
	MRNUMBER = {734038},
	MRREVIEWER = {E. M. Palmer},
}
		
		\bib{EFRS}{article}{
			title={Induced subgraphs of given sizes},
			author={Erd{\H{o}}s, Paul},
			author={F{\"u}redi, Zolt{\'a}n},
			author={Rothschild, Bruce}, 
			author={S{\'o}s, Vera},
			journal={Discrete mathematics},
			volume={200},
			number={1-3},
			pages={61--77},
			year={1999},
			publisher={Elsevier}
		}

		\bib{HMZ}{article}{
			title={Improvements on induced subgraphs of given sizes},
			author={He, Jialin}, 
			author={Ma, Jie}, 
			author={Zhao, Lilu},
			journal={arXiv preprint arXiv:2101.03898},
			year={2021}
		}
	
	\bib{KS1}{article}{
		title={Ramsey graphs induce subgraphs of quadratically many sizes},
		author={Kwan, Matthew},
		author={Sudakov, Benny},
		journal={International Mathematics Research Notices},
		volume={2020},
		number={6},
		pages={1621--1638},
		year={2020},
		publisher={Oxford University Press}
	}

	\bib{KS2}{article}{
		title={Proof of a conjecture on induced subgraphs of Ramsey graphs},
		author={Kwan, Matthew},
		author={Sudakov, Benny},
		journal={Transactions of the American Mathematical Society},
		volume={372},
		number={8},
		pages={5571--5594},
		year={2019}
	}
	
	\bib{LUW}{article}{
		title={A new series of dense graphs of high girth},
		author={Lazebnik, Felix},
		author={Ustimenko, Vasiliy},
		author={Woldar, Andrew},
		journal={Bulletin of the American mathematical society},
		volume={32},
		number={1},
		pages={73--79},
		year={1995}
	}
	
	\bib{M}{article}{
		title={A hypergraph extension of Tur{\'a}n's theorem},
		author={Mubayi, Dhruv},
		journal={Journal of Combinatorial Theory, Series B},
		volume={96},
		number={1},
		pages={122--134},
		year={2006},
		publisher={Elsevier}
	}

	\bib{MP}{article}{
		title={A new generalization of Mantel's theorem to k-graphs},
		author={Mubayi, Dhruv},
		author={Pikhurko, Oleg},
		journal={Journal of Combinatorial Theory, Series B},
		volume={97},
		number={4},
		pages={669--678},
		year={2007},
		publisher={Elsevier}
	}

\bib{NST}{article}{
	title={Ramsey graphs induce subgraphs of many different sizes},
	author={Narayanan, Bhargav},
	author={Sahasrabudhe, Julian},
	author={Tomon, Istv{\'a}n},
	journal={Combinatorica},
	volume={39},
	number={1},
	pages={215--237},
	year={2019},
	publisher={Springer}
}

\bib{S81}{article}{
	title={Systems of sets that have the T-property},
	author={Sidorenko, A. F.},
	journal={Moscow University Mathematics Bulletin},
	number={36},
	pages={22--26},
	year={1981}
}

\bib{T}{article}{
	title={On an extremal problem in graph theory},
	author={Tur{\'a}n, Paul},
	journal={Matematikai \'es Fizikai Lapok (in Hungarian)},
	volume={48},
	pages={436--452},
	year={1941}
}

	\end{biblist} 
\end{bibdiv}

\end{document}